\title[Mean Field Game with velocity interactions]{A Cucker-Smale inspired\\ deterministic Mean Field Game\\ with velocity interactions}
\author[Santambrogio]{Filippo Santambrogio}
\address[Filippo Santambrogio]{\newline Univ. Lyon, Universit\'e Claude Bernard Lyon 1, \newline CNRS UMR 5208, Institut Camille Jordan, \newline 43
boulevard du 11 novembre 1918, F-69622 Villeurbanne, France\newline and Institut Universitaire de France}
\email{santambrogio@math.univ-lyon1.fr}
\author[Shim]{Woojoo  Shim}
\address[Woojoo Shim]{\newline The Research Institute of Basic Sciences, \newline Seoul National University, Seoul 08826, Republic of Korea(South Korea) }
\email{cosmo.shim@gmail.com}
\newtheorem{theorem}{Theorem}[section]
\newtheorem{lemma}{Lemma}[section]
\newtheorem{corollary}{Corollary}[section]
\newtheorem{proposition}{Proposition}[section]
\newtheorem{remark}{Remark}[section]
\newtheorem{definition}{Definition}[section]
\newcommand{\R}{\mathbb R}
\newcommand{\pical}{\mathcal{P}}
\newcommand{\ve}{\varepsilon}
\newcommand{\bbt} {\mathbb T}
\newcommand{\spt} {\mathrm{spt}}
\newcommand{\Lip} {\mathrm{Lip}}
\newcommand{\still} {\mathrm{Still}}
\begin{document}

\date{\today}

\subjclass{92B25, 35Q84, 35D30} 

\keywords{Mean Field Game, Cucker-Smale model, Variational method}

\begin{abstract}
We introduce a mean field game model for pedestrians moving in a given domain and choosing their trajectories so as to minimize a cost including a penalization on the difference between their own velocity and that of the other agents they meet. We prove existence of an equilibrium in a Lagrangian setting by using its variational structure, and then study its properties and regularity.
\end{abstract}

\maketitle \centerline{\date}

\section{Introduction}
\setcounter{equation}{0}
This paper aims at proposing a simple model which is a bridge between some collective motion models such as the well-studied Cucker-Smale model \cite{C-S}, mainly applied to flock behavior, and the theory of Mean Field Games (MFG for short), introduced in \cite{LL06cr1,LL06cr2,LL07mf} and independently in \cite{HCMieeeAC06}, for economical and engineering applications. 

The simplest Cucker-Smale model describes the evolution of a family $(x_i)_{i=1,\dots,N}$ of particles, which are typically meant to represent a bunch of birds. It is important to underline this bird interpretation since it is emblematic of the spirit of the model, in comparison with MFG: each particle is indeed socially influenced by the behavior of the others, but is not rational and does not really choose how to move but just ``follows the others''. The evolution followed by each $x_i$ is typically described by a Cauchy problem with initial datum on $x_i(0)$ and $x_i'(0)=v_i(0)$, and the equation has the form
$$x_i'={}{v_i},\quad v_i'=-\frac 1N\sum_j \eta(x_i-x_j)(v_i-v_j),$$
where $\eta(x)$ is an interaction kernel which is typically chosen as a decreasing function of $|x|$.
This models the fact that each particle tends to align their velocity to that of the other particles, with a weight depending on their distance. It has been widely studied after \cite{C-S}, see for instance \cite{H-Liu}.

On the other hand, MFG theory (see \cite{Carnotes} for the first set of lecture notes on the topic) is more concerned with the behavior of a family of rational agents. Hence, particles represent humans rather than animals, whose rationality is limited. A Mean Field Game consists in the following principle: a family of agents should choose (or control) their trajectory in a state space, optimizing a cost which involves the position of the other players. This gives rise to a non-cooperative game, for which we look for a Nash equilibrium (see \cite{N}). This game is what is usually called a differential game, which underlines its dynamic nature. The main difference between differential games and static games is that agents could deviate from a given strategy along time, and the other agents could react to this change.   In a static game where every agent should choose a trajectory, he/she writes down the desired trajectory on a piece of paper, puts it in a sealed envelop, and gives it to a notary, who will announce the cost to every player after opening all the envelops. In a differential game, the strategy which is written on the same piece of paper is of algorithmic nature: after saying how he/she will move first, a rule on how to move from his/her current location is given depending on the observation of what the others have done. This means that, when considering whether to deviate from a given strategy, each player should consider that if he/she changes his trajectory the other players will see it and adapt their own trajectories to his/her change. Yet, MFG focus on a case which simplifies a lot the study: the continuous case, where agents are supposed to be indistinguishable and negligible. Indeed, in this case, only the distribution of mass on the set of trajectories plays a role, and if a single agent decides to deviate, he/she does not affect this distribution, which means that the other agents will not react to his/her change. This boils down to a game where, indeed, each player just chooses a trajectory, writes it in a sealed envelop, and waits for knowing his/her output.
In this way, a configuration in MFG can be described by a measure on the set of possible curves (giving rise to the so-called {\it Lagrangian equilibria}, see for instance \cite{CanCap,M-S} but also \cite{CarJimSan} in a different framework), and each player tries to choose the best curve, minimizing a certain cost.

Typically, the optimization problem considered by each agent is of the form 
$$\min\left\{\int_0^T L(t,\gamma(t),\gamma'(t),Q)dt+\Psi(\gamma(T))\;:\; \gamma(0)=x_0\right\},$$
where $Q\in \pical(\Gamma)$ is the distribution of mass of the players on the space $\Gamma$ of possible paths (other possibilities, involving the final cost $\Psi$ also depending on $Q$, or the minimization of the time horizon $T$ needed to reach a given target, as in \cite{M-S}, are also considered).

In most of these models, the above cost takes into account the interactions between agents in such a way so as to penalize passing through regions with high concentration of players. In some widely-studied models (which have a variational structure, see for instance \cite{BenCarSan,LavSan,CIME} as well as many lectures in \cite{LionsCDF}),  we have $L(t,x,v,Q)=\frac 12|v|^2+g(\rho_t(x))$ where $\rho_t$ is the density of the distribution $(e_t)_\# Q$ of players at time $t$, and $g$ is an increasing function. In other models, called MFG of congestion, we have costs of the form $L(t,x,v,Q)=\rho_t(x)^\alpha|v|^\beta$, for some exponents $\alpha,\beta>0$. In some cases \cite{CarMesSan,modestproposal} the interaction takes the form of a constraint on the density, such as $\rho\leq 1$, which is meant to represent the fact that a crowd of agents cannot fit in a too small area, as a property inherited from a particular appraoch, where each agent is a rigid sphere, and spheres cannot overlap (this is the spirit of \cite{MauRouSan, survey-crowd}, whose goal is to give a continuous formulation to the ideas of \cite{crowd1,crowd2}). 

A natural question would be to consider a more refined version of this granular model: if the spheres are so dense that they touch each other, then they are stuck into a crystalline configuration where their velocity is also constrained. One could think at a model where $\rho>1$ is impossible, $\rho=1$ implies that locally the velocity should be constant, and $\rho\approx 1$ should impose that the velocity is almost constant. A description of this model could be done in terms of a kinetic representation, where the unknown $f(t,x,v)$ stands for the distribution of mass on the phase space and the above considerations would mean $f(t,x,\cdot)=\delta_{v(t,x)}$ when $\rho(t,x):=\int f(t,x,v)dv$ saturates the constraint. Of course, different models where the constraints are replaced by penalizations would also be possible. Yet, the kinetic description in terms of the local distribution of the velocity features a serious drawback: from the point of view of weak convergence, alternating the regions (in the spirit of homogenization) where the velocity is constant and equal to a certain vector $v$ and regions where it is equal to the opposite vector $-v$ is very close to having $f(t,x,\cdot) = \frac 12 \delta_{v(t,x)}+ \frac 12 \delta_{-v(t,x)}$, while their effects on the above scenarios are completely different: in one case each agent only meets agents with the same velocity, in the other at each location there is a struggle between agents with opposite velocity.
This lack of weak stability prevents any sort of local model from being well-posed. In particular, it will be impossible to prove any kind of existence result, either by variational methods or by fixed-point methods. This suggests to consider a model where the velocity of each agent interacts with that of other agents nearby, up to a certain positive distance, or with that of all agents, but weighting the interaction through a decreasing funtion of the distance. {}{This reminds a lot of the interaction among agents in the Cucker-Smale model, though it is more like a modeling issue rather than the well-posedness of the aforementioned Cauchy problem.} Forgetting about the constrained model and the explicit role played by the density that was presented above just as an example, a very simple MFG model could be built upon the assumption that the cost for the agent $i$ following the trajectory $x_i(t)$ should include a term of the form
\begin{equation}\label{main term}
\int_0^T \sum_j \frac12 \eta(x_i(t)-x_j(t))|x_i'(t)-x_j'(t)|^2 dt,
\end{equation}
where $\eta(z)$ is a decreasing function of $|z|$. It is important to observe that this cost includes, even if in a very mild and implicit form, a penalization for overcrowding. Indeed, a useful way of rewriting it is as follows: set 
\begin{equation*}
\begin{aligned}
a(t,x)&=\sum_j \eta(x-x_j(t)), \quad u(t,x)=\frac{\sum_j x_j'(t)\eta(x-x_j(t))}{a(t,x)},\\
\sigma(t,x)&=\sum_j \eta(x-x_j(t)) |x_j'(t)-u(t,x)|^2,
\end{aligned}
\end{equation*} 
and then observe that the above cost can be re-written as
$$\int_0^T \frac12\big[a(t,x_i(t))|x_i'(t)-u(t,x_i(t))|^2+\sigma(t,x_i(t))\big]dt.$$
The coefficient $a$ in front of the term penalizing the difference between the velocity of the agent $i$ and the local average velocity is indeed a coefficient taking into account how many agents are present nearby (it is in some sense a regularized version of the density $\rho(t,x)$). Of course one could imagine a more refined model where different terms depending on $a$ or on $\rho$ appear, but this is the simplest one that we can consider.

Let us make two observations about this model. The first is of modeling nature: the presence of non-local interactions, ruled by an interaction kernel $\eta$ which can be taken, for instance, of the form $\eta(z)=e^{-\frac{|z|}{\ve}}$ for some $\ve>0$ which determines the length scale of the interaction, {}{suggests} that this could be a reasonable model reproducing the well-known phenomenon of {\it lane formation}. Indeed, many studies, starting from experimental ones, about pedestrian motion show that agents tend in some situations to spontaneously form lanes walking in opposite directions (the reader can look, among many publications on this topic, at \cite{lane1,lane2,lane3}, for instance). This can be observed for instance in underground corridors, where agents going in opposite directions (those going to the train and those going out from a previous one) meet, and they spontaneously create a number of lanes where individuals go in a same direction. One of the explications for this phenomenon could of course be the fact that pedestrian try to avoid crossing individuals moving in opposite direction, since contact with them would slow down their motion more than contact with agents moving in the same direction, which is exactly what is penalized in our model. The width of each lane is influenced by many factors, and is usually of the order of few people size. It is a huge simplification to say this, but one could think that it should be related to the distance at which interactions are significant, i.e. to the charasteric size of the kernel $\eta$ (to $\ve$ in our example above). This explains why we believe that the model we present in this paper could be a good and simple choice to reproduce lane formation phenomena. This will be briefly {}{addressed} in the last section of the paper, devoted to the multipopulation case (of course, lane formation cannot occur if all players have the very same goal, and we need to observe a situation where two different groups meet), but the answer could only come through a deeper study or numerical simulations.

The second observation concerns a refinement of the model and some mathematical properties. Indeed, the cost above is that it is invariant under translation of all the trajectories by a common time-dependent vector, i.e. if we replace each $x_j(t)$ with $x_j(t)+v(t)$ the condition for each $x_i$ to be optimal given the other trajectories does not change. This also applies in presence of a final cost $\Psi(x_i(T))$, at least if $\Psi$ is an affine function. This invariance implies lack of compactness which could be fatal when proving the existence of an equilibrium. For this reason and also for modeling reasons it is a good choice to also add a cost on the kinetic energy of each player, which means that each agent tries to adapt its velocity to that of the others but at the same time to push so that the velocity is not too large as this requires an effort for everybody. The global cost to be considered can therefore be of the form 
$$\int_0^T \left(\frac\delta2 |x_i'(t)|^2+ \lambda\sum_j \frac12 \eta(x_i(t)-x_j(t))|x_i'(t)-x_j'(t)|^2\right) dt+\Psi(x_i(T)),$$
for some parameters $\delta,\lambda>0$. The same can be reformulated in a continuous setting using a probability measure $Q\in \pical(\Gamma)$ where $\Gamma$ is the set of all possible paths:
$$\int_0^T \left(\frac\delta2 |\gamma'(t)|^2+ \lambda\int_\Gamma\frac12 \eta(\gamma(t)-\omega(t))|\gamma'(t)-\omega'(t)|^2dQ(\omega)\right) dt+\Psi(\gamma(T)),$$
and also re-written in terms of $a,u$ and $\sigma$. 

Finding an equilibrium amounts then at solving a fixed point problem: using the same strategy as in \cite{CanCap} and \cite{M-S}, one defines a multi-valued operator $\mathcal O$ associating with every measure $Q\in\pical(\Gamma)$ the set of all probabilities on $\Gamma$ with $(e_0)_\# Q=m_0$ which are concentrated on optimal curves for $F(\cdot,Q)$, and loof for a fixed point $Q\in \mathcal O(Q)$ using Kakutani's Theorem (see \cite{Kak}). In order to prove existence of a fixed point,  compactness properties of this operator are needed, which are unfortunately difficult to prove. Indeed, even under the addition of the kinetic term $\delta\int|\gamma'|^2$, it has not been possible to find a quantity that could be used to prove compactness on $Q$ (for instance, the average kinetic energy $\int\int|\gamma'|^2dtdQ(\gamma)$) and which decreased when passing from a measure $Q$ to a measure $\tilde Q\in \mathcal O(Q)$.

Instead, existence of an equilibrium will be proven in this paper via a variational method, specific to this very setting. Indeed, it is possible to prove that minimizers of a suitable functional $\mathcal J(Q)$ are necessarily equilibria, and it will be exploited in the sequel.

This variational framework makes a big difference between our work and some other recent works on MFG with velocity interactions. This is indeed not at all the first paper connecting Cucker-Smale models to MFG, and the very first one is probably \cite{M-P-R}. In such a paper, agents solve a stochastic control problem: they control their own acceleration, which also includes a brownian part (i.e. they follow $dx=vdt,\,dv=udt+dB_t$, $x$ being their position, $v$ their velocity, and $u$ the control), and the cost they minimize include a term of the form \eqref{main term}. Note that standard MFG consider interactions between agents based on their position in the state space, which does not allow velocity interactions unless the velocity itself is considered as a state. This explains why a natural choice is to consider $(x,v)$ as the state, and allow agents to control their acceleration, a topic which has been recently investigated, for instance in \cite{AchManMarTch} and \cite{CanMen}. Moreover, we mention the second part of \cite{BarCar} which specifically {}{addresses} the Cucker-Smale setting as a limit of MFG problems with control on the accelleration.

Mean-Field Games where the state is the position, and the velocity (or the control) of all agents enters into the computation of the cost are usually called  {\it MFG of controls} (or {\it extended MFG}, see \cite{extended1,extended2}). This is a 
less studied and more involved class of MFG, where the interaction between an agent and the rest of the population depends on the distribution or the controls chosen by the population, or on the joint distribution of states and controls. One of the first paper on MFG of controls was \cite{CarLeh}, for financial applications, where the agents control their portfolio by deciding how much to sell or to buy of a stock. For a very good introduction to the topic of MFG of controls, a good choice will be the PhD thesis \cite{Ziad-thesis}, which will be available soon. The author of such a thesis considers indeed (see also \cite{AchKob} and \cite{Kob}) MFG of controls which are really inspired by the Cucker-Smale model for collective motion. The cost is not exactly the same which is considered in the present paper, and diffusion is considered, but the qualitative features of the model are the same, despite some technical difference. In particular, in order to develop a general existence theory for MFG of controls in a non-monotone setting (in many cases in MFG the monotonicity properties of the cost imply that agents prefer choosing locations which are not the same as the other players; when applying this to velocity interactions, this would mean that agents prefer to choose velocities which are not the same, differently from what Cucker-Smale based models consider), a certain contractivity property is required in \cite{Kob} and in Chapter 2 of  \cite{Ziad-thesis}, which means that the term $|x'(t)-u(t,x(t))|^2$ in the cost is replaced by $|x'(t)-\lambda u(t,x(t))|^2$, for $\lambda<1$.

Due to the analogies with some models presented in  \cite{Ziad-thesis}, the reader could conclude that  the model presented in our paper is also an example of MFG of controls. This is reasonable, but could be debated, even if classifying it or not as a MFG of controls is a matter of taste. Indeed, we believe that one of the key features of MFG of controls is that there is an interaction between players which is not only based on where they are, but also on their intentions (their control, the effort that they put in the motion, etc...). If their movement is completely and bijectively determined by their control, then the distinction between the position and the control become less clear. Indeed, for deterministic MFG (without Brownian diffusion), the distinction about what is the state of the system is quite subtle and debatable when players are negligible since, as we said, the game is finally statical and the choice of a strategy consists indeed in choosing a trajectory, which includes both the information about position and velocity. As a result, the name ``MFG of controls'' seems more adapted to the cases when the control of each player does not give a full information on the evolution of his/her position, as it happens in the presence of diffusion. 

As the reader can see, we insist on the difference between models with diffusion and deterministic models. Indeed, all the models considered so far in MFG with similar features as ours include diffusion, and it seems, to the best of our knowledge, this is the first paper with an existence result for a deterministic MFG of {}{this} type (we could also say, if we accept this terminology, for a deterministic MFG of control, even if Chapter 4 in \cite{Ziad-thesis} suggests that existence for monotone MFG of controls can also be proven with degenerate or no diffusion). Note that, in the framework of crowd motion and in connection with \cite{MauRouSan}, \cite{modestproposal} also introduced a model which later turned out to fit the framework of deterministic MFG of controls. More precisely, \cite{modestproposal} is an example of a MFG where the control of each agent does not give full information on his position, since the velocity of the agents is a function of their own control and of a global pressure effect which depends on all the controls and the positions. However, the same paper did not include a general existence result, which is out of reach because of mathematical difficulties. Indeed, the main reason why the model presented in this paper can be analyzed is the choice of a very precise algebraic form for the cost which allows for a variational forumaltion, in the sense that minimizers of a (non-convex) global energy can be proven to be equilibria, which allows for an existence result based on semicontinuity and not on fixed point theorems.

The present paper is hence organized as follows: after this long introduction, Section 2 gives a precise description of the model we consider and provides the definition of the equilibrium we seek, together with few preliminary results; Section 3 is devoted to the variational formulation: we introduce a certain global energy, prove that its minimizers are equilibria in the desired sense, and prove existence of minimizers; Section 4 is the most technical one, and analyzes the equilibria: in 4.1 we define the relevant quantities ($a$, $u$ and $\sigma$, already introduced in these pages) for the interaction, in 4.2 we formally derive the MFG system, coupling a Hamilton-Jacobi equation and a continuity equation, describing the equilibrium, in 4.3 we prove $C^{1,1}$ regularity of the trajectories followed by the agents, in 4.4 we apply this regularity to proving monokineticity (all agents passing through a same point a same time share the same velocity) and in 4.5 we prove that, for short time ($T$ small) only one curve is followed starting from each point, which can be interpreted as a pure-strategy equilibrium; finally, Section 5 briefly explains how to extend the model to the multipopulation case and its applications.
%

\section{The model: a Mean-Field Game with velocity interactions}

Even if in the introduction, in order to present the main ideas, we described the model as if a finite number of agents were involved, the rigorous description in this section will be done using the language of measures on curves, in order to include the continuous case where every agent can be negligible. The case of agents with positive mass can also fit this framework using measures with atoms, yet it is important to notice that 
\begin{itemize}
\item when agents have positive mass there is a difference between dynamic games and static games, and ignoring this difference means that we do not consider the dynamic aspect of the game;
\item as we consider arbitrary measures on the space of curves and we do not impose the possible mass of their atoms, an agent with positive mass can split its mass into several curves, which corresponds in Nash's language to mixed strategies.
\end{itemize}

In order to define our game and our notion of equilibrium, let us fix a space domain $\Omega$ which can be $\R^d$, a {compact} subset of $\R^d$, or the $d$-dimensional torus, and a time $T>0$. We call $\Gamma$ the space of all continuous curves defined on the interval $[0,T]$ and valued in $\Omega$, endowed with topology of uniform convergence. We denote by $e_t:\Gamma\to\Omega$ for $t\in[0,T]$ the evaluation map $e_t(\gamma)=\gamma(t)$. For curves $\gamma\in \Gamma$, we define their kinetic energy $K(\gamma)$ as
\[
K(\gamma):=\begin{cases}
\displaystyle\frac{1}{2}\int_{0}^T|\gamma'(t)|^2dt& \mbox{ if }\gamma\in H^1\\
\displaystyle \hspace{0.9cm}+\infty & \mbox{ if }\gamma\notin H^1
\end{cases}.\]

Then, given a function $\Psi:\Omega\to\R$ and a number $\delta>0$, we define a cost function on $\Gamma$ including the kinetic energy and $\Psi$ as a final cost:
\[
K_{\delta,\Psi}(\gamma):=\delta K(\gamma)+\Psi(\gamma(T)).\]
Given an interaction kernel $\eta:\R^d\to \R$, we also define a cost function on $\Gamma\times\Gamma$ via
\[ V(\gamma,\tilde\gamma):=\begin{cases}
\displaystyle \frac{1}{2}\int_{0}^{T}|\gamma'(t)-\tilde\gamma'(t)|^2\eta(\gamma(t)-\tilde\gamma(t)) dt & \mbox{ if }\gamma-\tilde\gamma\in H^1\\
\displaystyle\hspace{2.8cm}+\infty& \mbox{ if }\gamma-\tilde\gamma\notin H^1
\end{cases}. \]
Given a probability measure $Q\in\pical(\Gamma)$ and a parameter $\lambda>0$, we define $V_Q:\Gamma\to\mathbb{R}$ as 
\[V_Q(\cdot)=\int_{\Gamma} V(\cdot,\tilde\gamma) dQ(\tilde\gamma),\]   
and then
\begin{equation}\label{extendF}
F(\gamma,Q):=K_{\delta,\Psi}(\gamma)+{\lambda}V_Q(\gamma),
\end{equation}
a quantity which can take $+\infty$ as a value.

An important observation about the function $K_{\delta,\Psi}$ is the following.
\begin{lemma}\label{Kkin}
Suppose that $\Psi$ is either bounded or Lipschitz continuous. Then there exists a constant $C$, depending on $\Psi,\delta$ and $T$, such that we have
$$K_{\delta,\Psi}(\gamma)\geq \frac{\delta}{2}K(\gamma)+\Psi(\gamma(0))-C$$
for every curve $\gamma\in \Gamma$. 
\end{lemma}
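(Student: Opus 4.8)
The plan is to rearrange the desired inequality into a lower bound on the kinetic energy pitted against the increment of $\Psi$ along the curve. Subtracting $\frac{\delta}{2}K(\gamma)$ from both sides of the definition $K_{\delta,\Psi}(\gamma)=\delta K(\gamma)+\Psi(\gamma(T))$, the claim is equivalent to
\[
\Psi(\gamma(0))-\Psi(\gamma(T))\leq \frac{\delta}{2}K(\gamma)+C.
\]
If $\gamma\notin H^1$ then both $K(\gamma)$ and $K_{\delta,\Psi}(\gamma)$ equal $+\infty$ and there is nothing to prove, so I may assume $\gamma\in H^1$ throughout, which in particular makes $\gamma$ absolutely continuous.

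I would then split according to the two hypotheses. In the bounded case, I simply note $\Psi(\gamma(0))-\Psi(\gamma(T))\leq 2\|\Psi\|_\infty$, and since $\frac{\delta}{2}K(\gamma)\geq 0$ the displayed inequality holds with $C=2\|\Psi\|_\infty$ (here $C$ does not even depend on $\delta$ or $T$). In the Lipschitz case, with $\Psi$ having Lipschitz constant $L$, the point is to control the displacement $|\gamma(0)-\gamma(T)|$ by the kinetic energy: by the fundamental theorem of calculus and Cauchy--Schwarz,
\[
|\gamma(0)-\gamma(T)|=\left|\int_0^T\gamma'(t)\,dt\right|\leq \sqrt{T}\left(\int_0^T|\gamma'(t)|^2\,dt\right)^{1/2}=\sqrt{2T\,K(\gamma)}.
\]
Combining this with the Lipschitz bound gives $\Psi(\gamma(0))-\Psi(\gamma(T))\leq L\sqrt{2T}\,\sqrt{K(\gamma)}$, and a single application of Young's inequality $ab\leq \frac{\delta}{2}a^2+\frac{1}{2\delta}b^2$ with $a=\sqrt{K(\gamma)}$ and $b=L\sqrt{2T}$ yields the claim with $C=\frac{L^2T}{\delta}$.

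I do not expect any serious obstacle: this is a soft coercivity estimate whose only content is that the final cost $\Psi$, whether bounded or merely Lipschitz, cannot destroy more than half of the kinetic penalization. The one point requiring a little care is the Lipschitz case, where one must resist the temptation to bound $\Psi$ pointwise (it may be unbounded) and instead estimate only the \emph{increment} $\Psi(\gamma(0))-\Psi(\gamma(T))$ through the Lipschitz property together with the $H^1$-to-displacement bound above, and then choose the weight in Young's inequality to be exactly $\delta$ so as to leave precisely $\frac{\delta}{2}K(\gamma)$ on the right-hand side. The resulting constant $C$ then depends on $\Psi$ (through $L$ or $\|\Psi\|_\infty$), $\delta$ and $T$, exactly as announced.
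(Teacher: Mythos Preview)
Your proof is correct and follows essentially the same approach as the paper's: in the bounded case both use the trivial oscillation bound, and in the Lipschitz case both control the increment of $\Psi$ by the kinetic energy via a Young-type inequality, arriving at the same constant $C=\Lip(\Psi)^2T/\delta$. The only cosmetic difference is that the paper applies Young's inequality pointwise to $\Lip(\Psi)|\gamma'(t)|$ and then integrates, whereas you first apply Cauchy--Schwarz in time and then Young once; the two orderings are equivalent here.
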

\begin{proof}
The estimate is straightforward if $\Psi$ is bounded, since we of course have $\Psi(\gamma(T))\geq \Psi(\gamma(0))-C$. For $\Psi$ Lipschitz continuous, we use
$$\Psi(\gamma(T))\geq \Psi(\gamma(0))-\Lip(\Psi)\int_0^T|\gamma'(t)|dt\geq \Psi(\gamma(0))-{{}\left(\frac{\delta}{4}\int_0^T|\gamma'(t)|^2dt+\frac{\Lip(\Psi)^2}{\delta}T\right)}.\qedhere$$
\end{proof}

In the model we consider, the initial distribution of the agents is fixed, and identified by a probability measure $m_0\in\mathcal{P}(\Omega)$. We will hence consider the behavior of a population of agents, initially distributed according to $m_0$, and trying to minimize the function $\gamma\mapsto F(\gamma,Q)$ according to their trajectory distribution $Q\in\mathcal{P}(\Gamma)$. Their interaction gives rise to a game that we will call  $MFG(\Omega,\Psi,\delta,\eta, \lambda,m_0)$. The ingredient of this game are indeed
\begin{itemize}
\item A $d$-dimensional domain $\Omega$ which could be either $\R^d$, a compact subset of $\R^d$, or the $d$-dimensional torus, where the movement takes place;
\item A probability measure $m_0\in\pical(\Omega)$ on $\Omega$, standing for the initial distribution of players;
\item Two parameters $\delta,\lambda>0$ fixing the relative weight of the cost for the velocity of each player ($\delta$) and of the interaction cost ($\lambda$);
\item A cost function $\Psi:\Omega\to\R$ which stand for the final cost and is supposed to be continuous, and either bounded or Lipschitz continuous.
\item An interaction kernel $\eta:\R^d\to\R$, which weights the interaction between the velocities of the agents in terms of their distances, which is supposed to be continuous, positive, and bounded.
\end{itemize}
All the above assumptions on the data will not be repeated throughout the paper and just referred to as ``standing assumptions''. Only possible extra requirements for some precise parts of the paper will be precised. 

Once all the ingredients are set, we look for a Nash equilibrium, which is defined as follows.
%

\begin{definition}
	A measure $Q\in\mathcal{P}(\Gamma)$ is said to be an equilibrium of $MFG(\Omega,\Psi,\delta,\eta, \lambda,m_0)$ if $e_{0\#}Q=m_0\in\mathcal{P}(\Omega)$ and
		\begin{equation}\label{equil}
		\int_{\Gamma}F(\gamma,Q)dQ(\gamma)<\infty,\quad 
		F(\gamma,Q)=\inf_{\substack{w\in \Gamma\\\omega(0)=\gamma(0)}} F(\omega,Q),\quad \forall ~\gamma\in\spt(Q).
		\end{equation}
\end{definition}

In order to deal with the constraint on the initial distribution we define the set $\mathcal{Q}_{m_0}$, the subset of $\mathcal{P}(\Gamma)$ where the initial distribution is $m_0$:
\[\mathcal{Q}_{m_0}:=\left\{Q\in\mathcal{P}(\Gamma):e_{0\#}{Q}=m_0\right\}.\]

\noindent We here present several topological properties of the set $\mathcal{Q}_{m_0}$ for later use. The topology we use for probability measures, which we call weak convergence, consists in the convergence in duality with continuous and bounded functions.

\begin{proposition}\label{P2.1}
	For every $t\in[0,T]$, the push-foward mapping $\mathcal{P}(\Gamma)\ni Q \mapsto e_{t\#}Q\in\mathcal{P}(\Omega)$ is continuous for the weak convergence of probability measures.
\end{proposition}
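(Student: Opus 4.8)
The plan is to reduce the statement to the elementary fact that push-forward along a continuous map preserves weak convergence, once we observe that the evaluation map $e_t$ is itself continuous. Since $\Omega$ is a subset of $\R^d$ or the $d$-dimensional torus, the space $\Gamma=C([0,T];\Omega)$ endowed with the topology of uniform convergence is a Polish space, and hence the weak convergence of probability measures both on $\Gamma$ and on $\Omega$ is metrizable. Consequently it suffices to verify continuity along sequences, which is how I would organize the argument.

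First I would fix $t\in[0,T]$ and check that $e_t:\Gamma\to\Omega$ is continuous: if $\gamma_n\to\gamma$ uniformly, then in particular $\gamma_n(t)\to\gamma(t)$, that is $e_t(\gamma_n)\to e_t(\gamma)$. This is immediate from the very definition of the topology of uniform convergence on $\Gamma$.

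Next I would take a sequence $Q_n\rightharpoonup Q$ in $\mathcal{P}(\Gamma)$ and show that $e_{t\#}Q_n\rightharpoonup e_{t\#}Q$. For an arbitrary test function $\phi\in C_b(\Omega)$, the composition $\phi\circ e_t$ belongs to $C_b(\Gamma)$, because $e_t$ is continuous by the previous step and $\phi$ is bounded and continuous. Using the change-of-variables formula for push-forward measures, one has
$$\int_\Omega \phi\, d(e_{t\#}Q_n)=\int_\Gamma \phi\circ e_t\, dQ_n \longrightarrow \int_\Gamma \phi\circ e_t\, dQ = \int_\Omega \phi\, d(e_{t\#}Q),$$
where the convergence in the middle is precisely the definition of $Q_n\rightharpoonup Q$ tested against the bounded continuous function $\phi\circ e_t$. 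Since $\phi\in C_b(\Omega)$ was arbitrary, this yields the desired weak convergence $e_{t\#}Q_n\rightharpoonup e_{t\#}Q$.

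The argument presents no genuine obstacle; the only points deserving care are confirming that $\Gamma$ is Polish so that weak convergence is metrizable and sequential continuity is enough, and ensuring that $\phi\circ e_t$ is indeed bounded and continuous, which is exactly where the continuity of $e_t$ enters. Everything else is the standard duality characterization of weak convergence together with the definition of the image measure.
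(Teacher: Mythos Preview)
Your proof is correct and follows essentially the same approach as the paper: testing against an arbitrary $\phi\in C_b(\Omega)$, using the change-of-variables formula for push-forwards, and noting that $\phi\circ e_t\in C_b(\Gamma)$ so that the weak convergence $Q_n\rightharpoonup Q$ applies. Your version is slightly more detailed in that you explicitly verify the continuity of $e_t$ and justify working with sequences via metrizability, but the argument is the same.
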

\begin{proof} Let $(Q_n)_{n\geq 1}$ be a sequence of probability measures in $\mathcal{P}(\Gamma)$ that converges to $Q$. Then, for every bounded continuous function $f\in C_b(\Omega)$, we have 
	\[\begin{aligned}
	\lim_{n\to\infty}\int f d(e_{t\#}Q_n)=\lim_{n\to\infty}\int (f\circ e_t)dQ_n=\int (f\circ e_t )dQ=\int f d(e_{t\#}Q),
	\end{aligned} \]
	since $f\circ e_t\in C_b(\Gamma)$. Therefore, we have $e_{t\#}Q_n\overset{*}{\rightharpoonup} e_{t\#}Q$ and conclude the continuity of $e_{t\#}$.
\end{proof}
\begin{proposition}\label{P2.2}
	For every probability measure $m_0\in\mathcal{P}(\Omega)$, the set $\mathcal{Q}_{m_0}$ is non-empty, convex and closed in $\mathcal{P}(\Gamma)$. 
\end{proposition}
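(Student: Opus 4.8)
The plan is to verify the three asserted properties in turn, the first two being elementary and the third resting on the continuity established in Proposition \ref{P2.1}.

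For non-emptiness, I would exhibit an explicit element by lifting $m_0$ along constant curves. Consider the map $\iota:\Omega\to\Gamma$ sending a point $x$ to the constant curve $\gamma_x\equiv x$. This map is continuous for the uniform topology on $\Gamma$, since $x_n\to x$ in $\Omega$ forces $\sup_{t\in[0,T]}|\gamma_{x_n}(t)-\gamma_x(t)|=|x_n-x|\to 0$; in particular it is Borel measurable, so $Q:=\iota_\#m_0$ is a well-defined probability measure on $\Gamma$. Because $e_0\circ\iota=\mathrm{id}_\Omega$, the functoriality of the push-forward yields $e_{0\#}Q=(e_0\circ\iota)_\#m_0=m_0$, whence $Q\in\mathcal{Q}_{m_0}$ and the set is non-empty.

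Convexity follows directly from the linearity of the push-forward. If $Q_0,Q_1\in\mathcal{Q}_{m_0}$ and $s\in[0,1]$, then $Q_s:=(1-s)Q_0+sQ_1$ is again a probability measure on $\Gamma$, and testing against an arbitrary $f\in C_b(\Omega)$ gives $\int f\,d(e_{0\#}Q_s)=(1-s)\int f\,d(e_{0\#}Q_0)+s\int f\,d(e_{0\#}Q_1)=\int f\,dm_0$, so $e_{0\#}Q_s=m_0$ and $Q_s\in\mathcal{Q}_{m_0}$.

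For closedness I would observe that $\mathcal{Q}_{m_0}=(e_{0\#})^{-1}(\{m_0\})$ is the preimage of a single point under the map $Q\mapsto e_{0\#}Q$, which is continuous for the weak convergence by Proposition \ref{P2.1}. Since a measure is determined by its integrals against $C_b(\Omega)$, the weak topology on $\mathcal{P}(\Omega)$ is Hausdorff and the singleton $\{m_0\}$ is closed, so its preimage under a continuous map is closed; equivalently, one can argue sequentially, noting that $\Gamma=C([0,T];\Omega)$ is a separable metric space so that weak convergence on $\mathcal{P}(\Gamma)$ is metrizable, and if $Q_n\to Q$ with $e_{0\#}Q_n=m_0$ then $e_{0\#}Q_n\to e_{0\#}Q$ forces $e_{0\#}Q=m_0$ by uniqueness of weak limits. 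The only point requiring a little care, and thus the closest thing to an obstacle, is justifying that the weak topology is Hausdorff (or metrizable) so that this argument is legitimate; this is guaranteed by the separability of $\Omega$ and hence of $\Gamma$, and is the reason the earlier standing assumptions restrict $\Omega$ to $\mathbb{R}^d$, a compact subset, or the torus.
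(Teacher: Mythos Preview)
Your proof is correct and follows essentially the same approach as the paper: the constant-curve embedding you call $\iota$ is exactly the map denoted $\still$ there, convexity is deduced from linearity of the push-forward, and closedness from the continuity of $e_{0\#}$ established in Proposition~\ref{P2.1}. You simply supply a bit more detail on the continuity of $\iota$ and on why the weak topology is Hausdorff, but the structure and ideas coincide.
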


\begin{proof}
	The convexity of $\mathcal{Q}_{m_0}$ immediately follows from the linearity of $e_{0\#}:Q\to e_{0\#}Q$. To prove $\mathcal{Q}_{m_0}$ is non-empty, we consider the trivial mapping $\still:\Omega\to \Gamma$ where $\still(x)$ is the constant curve staying still at $x$ for each $x\in\Omega$, which is of course contained in $\Gamma$. Then, we can find an element $\still_{\#}m_0$ of $\mathcal{Q}_{m_0}$ since 
	\[\still_{\#}m_0\in\mathcal{P}(\Gamma),\quad e_{0\#}(\still_{\#}m_0)=m_0. \]
	Finally, one has the closedness of $\mathcal{Q}_{m_0}$ as a consequence of the continuity of $e_{0\#}:\mathcal{P}(\Gamma)\to\mathcal{P}(\Omega)$ in Proposition \ref{P2.1}.
\end{proof}
\section{Variational Framework} 

In this section we will see how equilibria for our MFG can be obtained by minimizing an energy among measures $Q$ on curves. This kind of variational problems are nowadays standard in calculus of variations, starting from Brenier's formulation for the incompressible Euler equation (see \cite{br}), before being popularized in optimal transport in particular for branched transport and congested traffic (see Chapter 4 in \cite{OTAM}). Applications of variational principle on measures to {}{equilibria} involving interaction costs are also not new, but usually performed in statical settings, as one can see from \cite{BlaMosSan}.



\begin{lemma}\label{L3.1}
Under the standing assumptions on the model, the extended real-valued functional  \[J:\Gamma\times 
	\Gamma\to\R\cup\{+\infty\},\quad  J(\gamma,\tilde\gamma)=K_{\delta,\Psi}(\gamma)+K_{\delta,\Psi}(\tilde\gamma)+\lambda V(\gamma,\tilde\gamma)\]
	is lower semicontinuous.
\end{lemma}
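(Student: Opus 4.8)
The goal is to show that $J(\gamma,\tilde\gamma)=K_{\delta,\Psi}(\gamma)+K_{\delta,\Psi}(\tilde\gamma)+\lambda V(\gamma,\tilde\gamma)$ is lower semicontinuous for the product of the uniform topologies. Since lower semicontinuity is preserved under sums (of extended-real-valued functions bounded below, or more carefully when no $+\infty-\infty$ indeterminacy arises), and since the kinetic term decouples, the natural plan is to treat the three summands separately. First I would recall that $\gamma\mapsto K(\gamma)=\tfrac12\int_0^T|\gamma'|^2$ is lower semicontinuous for uniform convergence: this is the standard weak lower semicontinuity of the Dirichlet energy, using that uniform convergence $\gamma_n\to\gamma$ together with a bound $\sup_n K(\gamma_n)<\infty$ forces weak $H^1$ convergence $\gamma_n'\rightharpoonup\gamma'$ in $L^2$, whence $\int|\gamma'|^2\le\liminf\int|\gamma_n'|^2$ by weak lower semicontinuity of the $L^2$ norm; if the liminf of the energies is $+\infty$ there is nothing to prove. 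The final-cost term $\gamma\mapsto\Psi(\gamma(T))$ is continuous because $e_T$ is continuous for uniform convergence and $\Psi$ is continuous by the standing assumptions, so $K_{\delta,\Psi}$ is lower semicontinuous, and likewise for the $\tilde\gamma$ variable.

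The heart of the matter is the interaction term $V(\gamma,\tilde\gamma)=\tfrac12\int_0^T|\gamma'-\tilde\gamma'|^2\,\eta(\gamma-\tilde\gamma)\,dt$, which I expect to be the main obstacle because the integrand involves the product of the (only weakly convergent) derivative term with the variable weight $\eta(\gamma(t)-\tilde\gamma(t))$. The clean way to handle this is to pass to the difference curve $\zeta:=\gamma-\tilde\gamma$, so that $V$ depends only on $\zeta$, and to prove lower semicontinuity of $\zeta\mapsto\tfrac12\int_0^T|\zeta'(t)|^2\eta(\zeta(t))\,dt$ under uniform convergence $\zeta_n\to\zeta$. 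Take a sequence with $\liminf$ of the energies finite (otherwise nothing to prove); passing to a subsequence realizing the liminf and using that $\eta$ is bounded below away from zero only on compacts, the cleanest device is a convexity/duality argument. I would write the integrand using the pointwise identity
\[
\tfrac12|\zeta'|^2\eta(\zeta)=\sup_{p\in\R^d}\Big(\eta(\zeta)\,p\cdot\zeta'-\tfrac12\eta(\zeta)|p|^2\Big),
\]
or, more robustly, test against smooth compactly supported vector fields: for any $\phi\in C_c^\infty((0,T);\R^d)$,
\[
\int_0^T\Big(\eta(\zeta)\,\phi\cdot\zeta'-\tfrac12\eta(\zeta)|\phi|^2\Big)\,dt\le\tfrac12\int_0^T|\zeta'|^2\eta(\zeta)\,dt.
\]
For fixed $\phi$ the left-hand side is \emph{continuous} under uniform convergence: on a uniform bound for $K$ one has $\zeta_n'\rightharpoonup\zeta'$ in $L^2$, while $\eta(\zeta_n)\phi\to\eta(\zeta)\phi$ strongly in $L^2$ (by continuity and boundedness of $\eta$ and dominated convergence), so the product converges, and the quadratic term converges by dominated convergence as well. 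Taking the supremum over $\phi$ then yields
\[
\tfrac12\int_0^T|\zeta'|^2\eta(\zeta)\,dt=\sup_\phi\int_0^T\Big(\eta(\zeta)\,\phi\cdot\zeta'-\tfrac12\eta(\zeta)|\phi|^2\Big)\,dt\le\liminf_{n}\;\tfrac12\int_0^T|\zeta_n'|^2\eta(\zeta_n)\,dt,
\]
which is exactly the desired lower semicontinuity of $V$.

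Finally I would assemble the pieces: on any sequence $(\gamma_n,\tilde\gamma_n)\to(\gamma,\tilde\gamma)$ for which $\liminf J$ is finite, the three nonnegative-up-to-constants summands each obey the liminf inequality (the constants from Lemma \ref{Kkin} or from boundedness of $\Psi$ only shift things harmlessly and there is no $+\infty-\infty$ ambiguity since $K$ and $V$ are nonnegative), hence $J(\gamma,\tilde\gamma)\le\liminf_n J(\gamma_n,\tilde\gamma_n)$. The subtle point to check carefully is that the uniform bound on the energy needed to extract weak $H^1$ convergence of $\gamma_n'$, $\tilde\gamma_n'$, and thus of $\zeta_n'$, is available: along a subsequence realizing the liminf the kinetic terms $K(\gamma_n),K(\tilde\gamma_n)$ are bounded, which controls $\|\gamma_n'\|_{L^2}$ and $\|\tilde\gamma_n'\|_{L^2}$ and hence $\|\zeta_n'\|_{L^2}$, so the weak compactness used in both the kinetic and the interaction steps is legitimate.
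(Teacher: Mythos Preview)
Your argument is correct and rests on the same mechanism as the paper's proof: along a subsequence realizing the $\liminf$, one extracts a uniform $H^1$ bound (via positivity of $V$ and Lemma~\ref{Kkin}), obtains weak $L^2$ convergence of the derivatives, and then uses convexity of the integrand in the velocity variables together with continuity in the position variables.

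The packaging is different, however. The paper treats $J$ as a single integral functional on the pair $(\gamma,\tilde\gamma)\in H^1([0,T];\Omega\times\Omega)$, observes that the integrand is continuous in $(\gamma(t),\tilde\gamma(t))$ and jointly convex (quadratic) in $(\gamma'(t),\tilde\gamma'(t))$, and then invokes a standard semicontinuity theorem (citing Giusti). You instead decompose $J$ into its three summands, handle the kinetic and final-cost pieces directly, and for the interaction term pass to the difference curve $\zeta=\gamma-\tilde\gamma$ and give an explicit Fenchel-type duality argument with test functions $\phi$. Your route is more self-contained and makes the convexity argument fully explicit, at the price of being longer; the paper's route is shorter but relies on a black-box citation. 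Substantively they are the same proof.
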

\begin{proof}
	Let $\{(\gamma_n,\tilde\gamma_n)\}_{n\geq 1}$ be any converging sequence in $\Gamma\times \Gamma$ to the limit $(\gamma,\tilde\gamma)$. Then, the lower semicontinuity of $J$ is provided once we show
	\begin{equation*}\label{claim}
	\liminf_{n\to\infty} J(\gamma_n,\tilde\gamma_n)\geq J(\gamma,\tilde\gamma).
	\end{equation*}
	Here, we assume that the left-hand side of \eqref{claim} is smaller than $\infty$, since otherwise the above inequality clearly holds. Then, by taking a subsequence if necessary, we further assume  $\{J(\gamma_{n},\tilde\gamma_{n})\}_{n\geq 1}$ is a converging sequence, i.e.,
	\begin{equation*}\label{miniseq}
	\lim_{k\to\infty}J(\gamma_{k},\tilde\gamma_{k})=\liminf_{n\to\infty} J(\gamma_n,\tilde\gamma_n).
	\end{equation*}
	Using the positivity of $V$, the fact that $\gamma_n(0)$ and $\tilde\gamma_n(0)$ converge, and Lemma \ref{Kkin}, one can see that the $H^1$-norm of $(\gamma_{n},\tilde\gamma_{n})_{n\geq 1}$ is bounded.
%
%
	As a consequence, the uniform limit $(\gamma,\tilde\gamma)$ is in $H^1([0,T];\Omega\times\Omega)$ and $(\gamma_n',\tilde\gamma_n')$ converges to $(\gamma',\tilde\gamma')$ weakly in $L^2([0,T];\Omega\times\Omega)$. Since the functional $J(\gamma,\tilde\gamma)$ is an integral functional involving continuous functions of $(\gamma(t),\tilde\gamma(t))$ and convex (quadratic) functions of $(\gamma'(t),\tilde\gamma'(t))$, standard semicontinuity results (see \cite{Giusti}, for instance) apply.

\end{proof}
\begin{remark}\label{r4.1}
	From Fatou's Lemma, we also obtain
	\[\gamma_n\to \gamma\Rightarrow\liminf_{n\to\infty}\int_{\Gamma}J(\gamma_n,\tilde\gamma)dQ(\tilde\gamma)\geq \int_{\Gamma} J(\gamma,\tilde\gamma)dQ(\tilde\gamma). \] 
\end{remark}
\noindent From now on, we define, for every $Q\in \mathcal{P}(\Gamma)$
\begin{align}\label{KVJ}
\begin{aligned}
\mathcal{K}_{\delta,\Psi}(Q)&:=\int_{\Gamma}K_{\delta,\Psi}(\gamma)dQ(\gamma),\\
\mathcal{V}(Q)&:=\int_{\Gamma\times \Gamma} V(\gamma,\tilde\gamma)d(Q\otimes Q)(\gamma,\tilde\gamma), \\
\mathcal{J}(Q)&:=\int_{\Gamma\times \Gamma}J(\gamma,\tilde\gamma)d(Q\otimes Q)(\gamma,\tilde\gamma)=2\mathcal{K}_{\delta,\Psi}(\tilde\gamma)(Q)+\lambda\mathcal{V}(Q).
\end{aligned}
\end{align}

Note that all the above functionals can take the value $+\infty$. Yet, we observe the following implication, valid under the assumption of Lemma \ref{Kkin}:
\begin{equation}\label{KimplV}
\mathcal{K}_{\delta,\Psi}(\tilde\gamma)(Q)<+\infty\Rightarrow \mathcal{V}(Q)<+\infty.
\end{equation}

Indeed, the inequality proven in Lemma \ref{Kkin} allows to deduce from $\mathcal{K}_{\delta,\Psi}(\tilde\gamma)(Q)<+\infty$ the finiteness of the average kinetic energy
$\int_\Gamma K(\gamma)dQ(\gamma)$,
and we can estimate $\mathcal V$ thanks to
\[\begin{aligned}
	2\mathcal{V}(Q)&=2\int_{\Gamma\times \Gamma} V(\gamma,\tilde\gamma)dQ(\gamma)dQ(\tilde\gamma) \\
	&=\int_{\Gamma\times \Gamma}\int_{0}^T|\gamma'-\tilde\gamma'|^2\eta(\gamma-\tilde\gamma)dtdQ(\gamma)dQ(\tilde\gamma)\\
	&\leq 2(\sup\eta)\int_{\Gamma\times \Gamma}\int_{0}^T\left(|\gamma'|^2+|\tilde\gamma'|^2\right)dtdQ(\gamma)dQ(\tilde\gamma)\\
	&=4(\sup\eta)\int_{\Gamma} K(\gamma)dQ(\gamma)<\infty,
	\end{aligned} \]

\begin{lemma}\label{mineq1}
	Suppose, besides the standing assumptions on the model, that $\eta$ is even function, i.e., $\eta(y)=\eta(-y)$ for all $y\in \mathbb{R}^d$, then any (local) minimizer $Q_0$ of $\mathcal{J}$ in $\mathcal{Q}_{m_0}$ satisfies 
	\begin{equation}\label{equil2}
	\int_{\Gamma} F(\gamma,Q_0)dQ(\gamma)\geq \int_{\Gamma} F(\gamma,Q_0)dQ_0(\gamma),\quad \forall~ Q\in\mathcal{Q}_{m_0}.
	\end{equation}
\end{lemma}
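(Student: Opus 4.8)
The plan is to exploit the convexity of $\mathcal{Q}_{m_0}$ (Proposition \ref{P2.2}) and to compute the first variation of $\mathcal{J}$ along the segment joining $Q_0$ to an arbitrary competitor. First I would fix $Q\in\mathcal{Q}_{m_0}$ and reduce to the case $\int_\Gamma F(\gamma,Q_0)\,dQ(\gamma)<+\infty$, since otherwise \eqref{equil2} holds trivially (its right-hand side is finite because $Q_0$ is a finite-energy minimizer, so that $\mathcal{K}_{\delta,\Psi}(Q_0)$ and $\mathcal{V}(Q_0)$ are finite). For $t\in[0,1]$ set $Q_t:=(1-t)Q_0+tQ$, which again lies in $\mathcal{Q}_{m_0}$ by convexity, and note that $Q_t\to Q_0$ weakly as $t\to0^+$.

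The key algebraic observation is that $\mathcal{J}$ splits into a part linear in $Q$ and a part quadratic in $Q$. I would introduce the bilinear form $B(Q,Q'):=\int_{\Gamma\times\Gamma}V(\gamma,\tilde\gamma)\,dQ(\gamma)\,dQ'(\tilde\gamma)$, so that $\mathcal{V}(Q)=B(Q,Q)$, and observe that the hypothesis that $\eta$ be even gives $V(\gamma,\tilde\gamma)=V(\tilde\gamma,\gamma)$, whence $B$ is symmetric. Using linearity of $\mathcal{K}_{\delta,\Psi}$ together with bilinearity and symmetry of $B$, I would expand
\[
\mathcal{J}(Q_t)=2\big[(1-t)\mathcal{K}_{\delta,\Psi}(Q_0)+t\,\mathcal{K}_{\delta,\Psi}(Q)\big]+\lambda\big[(1-t)^2\mathcal{V}(Q_0)+2t(1-t)B(Q_0,Q)+t^2\mathcal{V}(Q)\big],
\]
a quadratic polynomial in $t$. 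Differentiating at $t=0$ yields
\[
\frac{d}{dt}\Big|_{t=0}\mathcal{J}(Q_t)=2\big[\mathcal{K}_{\delta,\Psi}(Q)+\lambda B(Q_0,Q)\big]-2\big[\mathcal{K}_{\delta,\Psi}(Q_0)+\lambda\mathcal{V}(Q_0)\big].
\]

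The final step is to recognize the two bracketed quantities as $\int_\Gamma F(\gamma,Q_0)\,dQ(\gamma)$ and $\int_\Gamma F(\gamma,Q_0)\,dQ_0(\gamma)$ respectively. This follows from Fubini's theorem together with the definition $V_{Q_0}(\gamma)=\int_\Gamma V(\gamma,\tilde\gamma)\,dQ_0(\tilde\gamma)$, which identifies $\int_\Gamma V_{Q_0}\,dQ$ with $B(Q,Q_0)=B(Q_0,Q)$ by symmetry, and likewise $\int_\Gamma V_{Q_0}\,dQ_0$ with $\mathcal{V}(Q_0)$. Since $Q_0$ is a (local) minimizer and $t\mapsto Q_t$ is continuous with $Q_t\to Q_0$, the scalar function $t\mapsto\mathcal{J}(Q_t)$ attains a local minimum at $t=0$ on $[0,1]$, so its right derivative there is $\geq0$; combined with the computation above this is precisely \eqref{equil2}.

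The point requiring care is the integrability bookkeeping that legitimizes the bilinear expansion, namely checking that each of $\mathcal{K}_{\delta,\Psi}(Q)$, $B(Q_0,Q)$, $\mathcal{V}(Q)$ is finite. Under the reduction $\int_\Gamma F(\gamma,Q_0)\,dQ<+\infty$ the finiteness of $\mathcal{K}_{\delta,\Psi}(Q)$ is immediate, because $V_{Q_0}\geq0$, while $\mathcal{K}_{\delta,\Psi}(Q_0)<+\infty$ since $Q_0$ minimizes $\mathcal{J}$; the implication \eqref{KimplV} then gives $\mathcal{V}(Q)<+\infty$ and $\mathcal{V}(Q_0)<+\infty$, and the same crude bound used just after \eqref{KimplV}, namely $V(\gamma,\tilde\gamma)\leq2(\sup\eta)\big(K(\gamma)+K(\tilde\gamma)\big)$, integrated against $Q\otimes Q_0$ controls $B(Q_0,Q)$ by the finite average kinetic energies. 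With all terms finite the quadratic expansion in $t$ is rigorous and the one-sided derivative argument is fully justified.
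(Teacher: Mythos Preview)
Your proof is correct and follows essentially the same route as the paper's: both set $Q_t=(1-t)Q_0+tQ$, expand $\mathcal{J}(Q_t)$ as a quadratic polynomial in $t$, identify the first-order coefficient with $\int F(\cdot,Q_0)\,dQ-\int F(\cdot,Q_0)\,dQ_0$, and use (local) minimality at $t=0$. The only cosmetic differences are that the paper argues by contradiction while you use the one-sided derivative directly, and that you make explicit (via the symmetry of the bilinear form $B$) where the evenness of $\eta$ enters---a point the paper uses silently when matching $\int V\,d(Q_0\otimes Q)$ with $\int V_{Q_0}\,dQ$.
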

\begin{proof}
	First, from the definition of $Q_0$, the functional $\mathcal{J}$ has a finite evaluation at $Q_0$, i.e., $\mathcal{J}(Q_0)<\infty$. Suppose 
	that there exists a probability measure $Q\in\mathcal{Q}_{m_0}$ satisfying 
	\[\int_{\Gamma} F(\gamma,Q_0)dQ(\gamma)<\int_{\Gamma} F(\gamma,Q_0)dQ_0(\gamma)=\mathcal{K}_{\delta,\Psi}(\tilde\gamma)(Q_0)+\lambda\mathcal{V}(Q_0). \]
	Then, since $\mathcal{J}(Q_0)=2\mathcal{K}_{\delta,\Psi}(\tilde\gamma)(Q_0)+\lambda\mathcal{V}(Q_0)$ is finite, the above integrals are all finite.
	Now, as $\mathcal{Q}_{m_0}$ is convex, we set  $Q_\varepsilon:=(1-\varepsilon)Q_0+\varepsilon Q$ and employ the minimizing condition 
	\begin{equation}\label{minim}
	\mathcal{J}(Q_0)\leq \mathcal{J}(Q_\varepsilon),\quad \forall \varepsilon\in (0,1).
	\end{equation}
	We can further split the $\mathcal{J}$-value at $Q_\varepsilon$ by the order of $\varepsilon$ as below: 
	\begin{align}
	\begin{aligned}
	\frac{1}{2}\mathcal{J}(Q_\varepsilon)&=\mathcal{K}_{\delta,\Psi}(\tilde\gamma)(Q_\varepsilon)+\frac{\lambda}{2}\mathcal{V}(Q_\varepsilon)\\
	&=(1-\varepsilon)\mathcal{K}_{\delta,\Psi}(\tilde\gamma)(Q_0)+\varepsilon \mathcal{K}_{\delta,\Psi}(\tilde\gamma)(Q)+\frac{\lambda(1-\varepsilon)^2}{2} \mathcal{V}(Q_0)+\frac{\lambda\varepsilon^2}{2} \mathcal{V}(Q)\\
	&\hspace{0.3cm} +\lambda\varepsilon(1-\varepsilon)\int_{\Gamma\times \Gamma} V(\gamma,\tilde\gamma) d(Q_0\otimes Q)(\gamma,\tilde\gamma)\\
	&=:\mathcal{I}_0+\varepsilon \mathcal{I}_1+\varepsilon^2\mathcal{I}_2.
	\end{aligned}
	\end{align}
	Here, the zeroth order term $\mathcal{I}_0$ coincides with $\frac{\mathcal{J}(Q_0)}{2}$, and the first order term is 
	\begin{align}
	\begin{aligned}
	\mathcal{I}_1&=-\mathcal{K}_{\delta,\Psi}(\tilde\gamma)(Q_0)+\mathcal{K}_{\delta,\Psi}(\tilde\gamma)(Q)-\lambda\mathcal{V}(Q_0)+\lambda\int_{\Gamma\times \Gamma} V(\gamma,\tilde\gamma) d(Q_0\otimes Q)(\gamma,\tilde\gamma)\\
	&=-\int_{\Gamma}F(\gamma,Q_0) dQ_0(\gamma)+\int_{\Gamma}F(\gamma,Q_0) dQ(\gamma),
	\end{aligned}
	\end{align}
	which is finite and negative. Moreover, the second order term $\mathcal{I}_2= \frac{\lambda}{2}(\mathcal{V}(Q_0)+\mathcal{V}(Q))$ is finite (since $\mathcal{V}(Q_0)$ is finite and $\int_{\Gamma} F(\gamma,Q_0)dQ(\gamma)$ is finite, which implies $\mathcal K_{\delta,\Psi}(\tilde\gamma)(Q)<+\infty$ and hence $\mathcal{V}(Q)<+\infty$). This makes a contradiction for small $\ve>0$.
\end{proof}

\noindent Now, we show that a measure $Q_0$ satisfying $\mathcal{J}(Q_0)<\infty$ and \eqref{equil2} is indeed satisfying \eqref{equil}. First, we prove the following.

\begin{lemma}\label{mineq2}
	Under the standing assumptions on the model, if $Q_0\in\mathcal{Q}_{m_0}$ and 
	\begin{equation}\label{assump}
	\mathcal{J}(Q_0)<\infty,\quad 	\int_{\Gamma} F(\gamma,Q_0)dQ_{0}(\gamma)\leq \int_{\Gamma} F(\gamma,Q_0)dQ(\gamma),\quad \forall~ Q\in\mathcal{Q}_{m_0},
	\end{equation}
	then $Q_0$ satisfies 
	\begin{equation}\label{equil3}
	F(\gamma,Q)=\inf_{\substack{w\in \Gamma\\\omega(0)=\gamma(0)}} F(\omega,Q),\quad Q_0-\mbox{almost~every}~ \gamma.
	\end{equation}
\end{lemma}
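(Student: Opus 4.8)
The plan is to read the hypothesis \eqref{assump} as the optimality of $Q_0$ for a \emph{frozen} linear functional, and then to deduce fiberwise optimality by comparing $Q_0$ with competitors built from a measurable selection of near-optimal curves. Write $c:=F(\cdot,Q_0):\Gamma\to\R\cup\{+\infty\}$. I would first record the properties of $c$. It is lower semicontinuous: by Remark~\ref{r4.1} the map $\gamma\mapsto\int_\Gamma J(\gamma,\tilde\gamma)\,dQ_0(\tilde\gamma)=c(\gamma)+\mathcal{K}_{\delta,\Psi}(Q_0)$ is l.s.c., and $\mathcal{K}_{\delta,\Psi}(Q_0)$ is a finite constant (from $\mathcal{J}(Q_0)<\infty$ together with \eqref{KimplV}). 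Moreover $c\ge\Psi\circ e_0-C$ by Lemma~\ref{Kkin} and $V\ge0$, and $\int_\Gamma c\,dQ_0=\mathcal{J}(Q_0)-\mathcal{K}_{\delta,\Psi}(Q_0)<\infty$. Introduce the value function $\phi(x):=\inf\{c(\omega):\omega\in\Gamma,\ \omega(0)=x\}$, so that the right-hand side of \eqref{equil3} at $\gamma$ equals $\phi(\gamma(0))$. Testing with the constant curve $\still(x)$, whose cost is finite by the estimate following \eqref{KimplV}, shows $\phi(x)<\infty$, while $c\ge\Psi\circ e_0-C$ gives $\phi\ge\Psi-C$, so $\phi$ is real-valued. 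Since $c(\gamma)\ge\phi(\gamma(0))$ holds trivially and $\int_\Gamma c\,dQ_0<\infty$, establishing \eqref{equil3} is equivalent to the single integral inequality $\int_\Gamma c\,dQ_0\le\int_\Omega\phi\,dm_0$, where I used $e_{0\#}Q_0=m_0$.

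For this reverse inequality I would exploit \eqref{assump}. Fix $\varepsilon>0$ and produce an $m_0$-measurable map $T_\varepsilon:\Omega\to\Gamma$ with $T_\varepsilon(x)(0)=x$ and $c(T_\varepsilon(x))\le\phi(x)+\varepsilon$ for $m_0$-a.e. $x$. Then $Q^\varepsilon:=(T_\varepsilon)_\#m_0$ satisfies $e_{0\#}Q^\varepsilon=m_0$ because $e_0\circ T_\varepsilon=\mathrm{id}_\Omega$, so $Q^\varepsilon\in\mathcal{Q}_{m_0}$, and $\int_\Gamma c\,dQ^\varepsilon=\int_\Omega c(T_\varepsilon(x))\,dm_0(x)\le\int_\Omega\phi\,dm_0+\varepsilon$. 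Plugging $Q=Q^\varepsilon$ into \eqref{assump} gives $\int_\Gamma c\,dQ_0\le\int_\Gamma c\,dQ^\varepsilon\le\int_\Omega\phi\,dm_0+\varepsilon$, and letting $\varepsilon\downarrow0$ closes the inequality. Combining $c\ge\phi\circ e_0$ with $\int_\Gamma c\,dQ_0\le\int_\Omega\phi\,dm_0=\int_\Gamma\phi\circ e_0\,dQ_0$ and the finiteness of both sides forces $c=\phi\circ e_0$ $Q_0$-almost everywhere, which is precisely \eqref{equil3}.

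The one genuinely delicate point, which I expect to be the main obstacle, is the measurable selection $T_\varepsilon$. The graph $\{(x,\omega)\in\Omega\times\Gamma:\omega(0)=x\}$ is closed by continuity of the evaluation $e_0$, the space $\Gamma$ is Polish, and $c$ is a Borel (indeed l.s.c.) integrand independent of $x$; applying a measurable selection theorem of Jankov--von Neumann / Aumann type to the near-argmin multifunction $x\mapsto\{\omega:\omega(0)=x,\ c(\omega)\le\phi(x)+\varepsilon\}$ yields a universally measurable $T_\varepsilon$, which is enough to push forward the probability $m_0$ and obtain an element of $\mathcal{P}(\Gamma)$. The same reasoning makes $\phi$ universally measurable, so that $\int_\Omega\phi\,dm_0$ is well defined; its finiteness follows from $\int_\Omega(\Psi-C)\,dm_0\le\int_\Omega\phi\,dm_0=\int_\Gamma\phi\circ e_0\,dQ_0\le\int_\Gamma c\,dQ_0<\infty$, the lower bound $\int_\Omega\Psi\,dm_0>-\infty$ being ensured by Lemma~\ref{Kkin} applied between times $0$ and $T$.
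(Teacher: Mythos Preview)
Your argument is correct and follows the standard ``value function plus $\varepsilon$-optimal selector'' route. The paper proves the same lemma differently: it argues by contradiction, fixes rationals $q<r$ so that the set $A^q_r$ of curves with cost $>r$ admitting a competitor of cost $\le q$ from the same initial point has positive $Q_0$-measure, and then invokes the Kuratowski--Ryll-Nardzewski theorem to select, on $A^q_r$, a Borel improver. The measurability of the relevant multifunction is checked by hand, exploiting that the sublevel sets $\{F(\cdot,Q_0)\le p\}$ are $\sigma$-compact via the compact embedding $H^1\hookrightarrow C$, so that projections of closed sets inside products of such sublevels remain Borel. Your approach trades this structural argument for a heavier abstract tool (Jankov--von~Neumann/Aumann selection, yielding only universal measurability), but in exchange you get a cleaner, global, non-contradiction proof through the single inequality $\int c\,dQ_0\le\int\phi\,dm_0$. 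Both routes hinge on the same idea---build a competitor in $\mathcal Q_{m_0}$ from a measurable selection of better curves---and neither is intrinsically superior; the paper's version stays within Borel measurability and uses only the classical KRN theorem, while yours is shorter and closer to the optimal-transport folklore.

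One small caution: your final cancellation $\int c\,dQ_0\le\int\phi\,dm_0+\varepsilon$ implicitly uses that $\int_\Omega\phi\,dm_0$ is a finite real number, hence in particular that $\int_\Omega\Psi\,dm_0>-\infty$. Your sentence invoking ``Lemma~\ref{Kkin} between times $0$ and $T$'' does not actually deliver this lower bound (that lemma only gives $K_{\delta,\Psi}(\gamma)\ge\Psi(\gamma(0))-C$, which yields an \emph{upper} bound on $\int\Psi\,dm_0$). In the bounded or compact cases this is harmless; in the case $\Omega=\R^d$ with $\Psi$ merely Lipschitz you should either assume $\int|\Psi|\,dm_0<\infty$ (which the paper also needs, implicitly, for $\mathcal J$ to be bounded below) or reorganize the last step to work directly with the nonnegative integrand $c-\phi\circ e_0$.
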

\begin{proof}
	For any real number $p$, define $A^p$ as 
	\[A^p:=\left\{\gamma\in \Gamma: F(\gamma,Q_0)\leq p \right\}, \]
	which is clearly a $\sigma$-compact subset of $\Gamma$.	Now, for any positive rational number $q$, define a multifunction $\mathcal{S}^{q}$ as
	\[\mathcal{S}^{q}:\Gamma\to 2^{\Gamma},\quad \mathcal{S}^{q}(\gamma)=\begin{cases}
	\hspace{1.7cm}\emptyset& \gamma\notin H^1\\
	\left\{\omega\in A^q:\omega(0)=\gamma(0) \right\}& \gamma\in H^1.
	\end{cases} \]
	Then, for any closed ball $\overline{B}_R(x)$ in $\Gamma$, the lower inverse of $S^{q}$ is Borel measurable, since
	\begin{align}
	\begin{aligned}
	&\Big\{\gamma\in \Gamma:\mathcal{S}^{q}(\gamma)\cap \overline{B}_R(x)\neq \emptyset \Big\}\\
	&\hspace{1.7cm}=\Big\{\gamma\in H^1:\mathcal{S}^{q}(\gamma)\cap \overline{B}_R(x)\neq \emptyset \Big\}\\
	&\hspace{1.7cm}=\Pi_1\Big\{(\gamma,\omega)\in H^1\times A^q:\gamma(0)=\omega(0),~ \|\omega-x\|_{\sup}\leq R \Big\}\\
	&\hspace{1.7cm}=\bigcup_{p\in\mathbb{Q}\cap (0,\infty)}\Pi_1\Big\{(\gamma,\omega)\in A^p\times A^q:\gamma(0)=\omega(0),~ \|\omega-x\|_{\sup}\leq R \Big\},
	\end{aligned}
	\end{align}
	and $\Big\{(\gamma,\omega)\in A^p\times A^q:\gamma(0)=\omega(0),~ \|\omega-x\|_{\sup}\leq R \Big\}$ is compact in $\Gamma\times \Gamma$, as $H^1$ is compactly embedded into $\Gamma$. Here, we used the lower semicontinuity of $F(\cdot,Q_0)$ for the closedness of the above set. Therefore, the multifunction $\mathcal{S}^q$ is measurable. In particular, the set of points where its value is non-empty is also measurable.\\
	
	\noindent Now, assume that the set 
	\[\Big\{\gamma\in H:\exists~ \omega\in H,~ F(\omega,Q_0)<F(\gamma,Q_0),~\omega(0)=\gamma(0) \Big\} \]
	is not $Q_0$-negligible. This set is the union, among pairs of positive rational numbers $(q,r)$ satisfying $r>q$ of the sets
	\[\Big\{\gamma\in H: F(\gamma,Q_0)>r,~\left\{\omega: \omega(0)=\gamma(0),~F(\omega,Q_0)\leq q \right\}\neq \emptyset \Big\}, \]
	hence at least one of them has positive $Q_0$-measure.
	In other words, one can find a pair of positive rational number $(q,r)$ satisfying 
	\[Q_0(A_r^q)>0,\quad\mbox{where}\quad A_r^q:=\left\{\gamma\in \Gamma:F(\gamma,Q_0)>r \right\}\bigcap\left\{\gamma\in \Gamma:\mathcal{S}^q(\gamma)\neq\emptyset \right\}. \]
	
	\noindent Then, by using the Kuratowski-Ryll-Nardzewski Selection Theorem (see, for instance, \cite{C-V} for all the theory about measurable multifunctions), there exists a measurable selector $f:A_r^q\to \Gamma$ of the multifunction $\mathcal{S}^q:A_r^q\to 2^{\Gamma}$, i.e., $f$ is a measurable function satisfying 
	\[f(a)\in \mathcal{S}^q(a),\quad \forall~ a\in A_r^q. \]
	However, one can easily check that a measure $\widetilde{Q}:=f_{\#}(Q_0|_{A_r^q})+Q_0|_{\Gamma\setminus A_r^q}$ is in $\mathcal{Q}_{m_0}$ and 
	\[\int_{\Gamma} F(\gamma,Q_0)dQ_0(\gamma)>\int_{\Gamma} F(\gamma,Q_0)d\widetilde{Q}(\gamma), \]
	which contradicts \eqref{assump}.\end{proof}
The only difference between \eqref{equil} and \eqref{equil3} is that \eqref{equil} guarantees the minimality of $F(\gamma,Q)$ for all $\gamma$ in a support of $Q$, which is a more concrete and useful notion than only having it for $Q$-a.e. curve, as in \eqref{equil3} .
\begin{lemma}\label{mineq3}
	Condition \eqref{equil3} is equivalent to 
	\begin{equation*}
	\int_{\Gamma}F(\gamma,Q)dQ(\gamma)<\infty,\quad 
	F(\gamma,Q)=\inf_{\substack{w\in \Gamma\\\omega(0)=\gamma(0)}} F(\omega,Q),\quad \forall~\gamma\in\spt(Q).
	\end{equation*}
	\end{lemma}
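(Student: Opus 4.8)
The plan is to reduce the statement to a single nontrivial fact: that the set of curves which are optimal against $Q$ is closed, so that a property holding $Q$-almost everywhere automatically propagates to the whole support. First I would reformulate both conditions through the value function $\phi:\Omega\to\R\cup\{+\infty\}$, $\phi(x):=\inf\{F(\omega,Q):\omega\in\Gamma,\ \omega(0)=x\}$. Since $\phi(\gamma(0))\le F(\gamma,Q)$ holds for every $\gamma$ by definition of the infimum, the minimality requirement in \eqref{equil3} and in the target \eqref{equil} is the single inequality $F(\gamma,Q)\le\phi(\gamma(0))$. The implication \eqref{equil}$\Rightarrow$\eqref{equil3} is then immediate: $\Gamma=C([0,T];\Omega)$ is a separable metric space, hence $Q(\spt Q)=1$, and a property valid for every $\gamma\in\spt Q$ is valid $Q$-a.e. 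Throughout I read both conditions in the presence of $\int_\Gamma F(\gamma,Q)\,dQ(\gamma)<\infty$ (equivalently, finite average kinetic energy $\int_\Gamma K\,dQ<\infty$): this is the setting in which the lemma is applied, where $Q=Q_0$ satisfies $\mathcal J(Q_0)<\infty$, and it is in any case the ingredient that powers the construction below.

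For the converse \eqref{equil3}$\Rightarrow$\eqref{equil}, introduce the optimal set $\mathcal O:=\{\gamma\in\Gamma:\ F(\gamma,Q)\le\phi(\gamma(0))\}$. Then \eqref{equil3} says exactly $Q(\mathcal O)=1$, while the conclusion \eqref{equil} says $\spt Q\subseteq\mathcal O$. Since any closed set of full measure contains $\spt Q$, it suffices to prove that $\mathcal O$ is closed. Writing $g(\gamma):=F(\gamma,Q)-\phi(\gamma(0))$ we have $\mathcal O=\{g\le 0\}$, so it is enough to show $g$ is lower semicontinuous. The term $F(\cdot,Q)$ is lower semicontinuous by Remark \ref{r4.1}, once one notices that $\int_\Gamma J(\gamma,\tilde\gamma)\,dQ(\tilde\gamma)=F(\gamma,Q)+\int_\Gamma K_{\delta,\Psi}(\tilde\gamma)\,dQ(\tilde\gamma)$ and the last integral is a finite constant. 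It thus remains to prove that $\gamma\mapsto\phi(\gamma(0))$ is \emph{upper} semicontinuous; since $e_0$ is continuous this reduces to upper semicontinuity of $\phi$ on $\Omega$.

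This is the heart of the matter. To establish $\limsup_{x\to x_0}\phi(x)\le\phi(x_0)$, fix $\varepsilon>0$ and a near-optimal competitor $\omega_0$ with $\omega_0(0)=x_0$ and $F(\omega_0,Q)<\phi(x_0)+\varepsilon$, so $\omega_0\in H^1$. For $x$ close to $x_0$ I would build the competitor $\omega_x(t):=\omega_0(t)+(1-t/T)(x-x_0)$, which satisfies $\omega_x(0)=x$ and, crucially, $\omega_x(T)=\omega_0(T)$, so the final cost $\Psi(\omega_x(T))$ is unchanged. Then $\phi(x)\le F(\omega_x,Q)$ and the plan is to prove $F(\omega_x,Q)\to F(\omega_0,Q)$ as $x\to x_0$. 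The kinetic term converges because $\omega_x'=\omega_0'-(x-x_0)/T\to\omega_0'$ in $L^2$; for $V_Q(\omega_x)=\int_\Gamma V(\omega_x,\tilde\gamma)\,dQ(\tilde\gamma)$ I would pass to the limit by dominated convergence, using that $\eta$ is continuous and bounded, that $\omega_x\to\omega_0$ uniformly, and the time-and-$Q$ integrable bound $|\omega_x'-\tilde\gamma'|^2\eta(\omega_x-\tilde\gamma)\le C(\sup\eta)(|\omega_0'|^2+1+|\tilde\gamma'|^2)$, whose integral is controlled by $\int_\Gamma K\,dQ<\infty$. This yields $\limsup_{x\to x_0}\phi(x)\le F(\omega_0,Q)<\phi(x_0)+\varepsilon$, and $\varepsilon\to0$ gives the claim.

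The main obstacle is exactly this upper semicontinuity of $\phi$, and within it the continuity of the velocity-interaction term under the perturbation $\omega_0\mapsto\omega_x$; the decisive points are that finite average kinetic energy of $Q$ supplies the dominating function, and that the interpolation freezes the endpoint $\omega_0(T)$, so the possibly only-Lipschitz final cost $\Psi$ disappears from the limit. One caveat concerns the domain: the affine interpolation stays in $\Omega$ when $\Omega=\R^d$ or the torus, but for a non-convex compact $\Omega$ it must be replaced by a suitable local construction near $x_0$. Once $\phi$ is upper semicontinuous, $g$ is lower semicontinuous, $\mathcal O=\{g\le0\}$ is closed, and $\spt Q\subseteq\mathcal O$ follows from $Q(\mathcal O)=1$; together with the finiteness carried along, this is precisely \eqref{equil}.
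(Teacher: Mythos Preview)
Your argument is correct and in essence parallel to the paper's, but the organization differs in a useful way. You recast ``the optimal set is closed'' as lower semicontinuity of $g(\gamma)=F(\gamma,Q)-\phi(\gamma(0))$, splitting this into the known l.s.c.\ of $F(\cdot,Q)$ (via Remark~\ref{r4.1}) and the upper semicontinuity of the value function $\phi$ on $\Omega$. The paper instead works directly: it takes a sequence of optimal curves $\gamma_n\to\gamma$ and, for an arbitrary competitor $\tilde\gamma$ of $\gamma$, builds competitors $\tilde\gamma_n$ of $\gamma_n$ by a rigid translation $\tilde\gamma_n=\tilde\gamma+(\gamma_n(0)-\gamma(0))$, so that $\tilde\gamma_n'=\tilde\gamma'$ and dominated convergence is immediate; the endpoint moves, and continuity of $\Psi$ handles the final cost. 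Your competitor $\omega_x(t)=\omega_0(t)+(1-t/T)(x-x_0)$ freezes the endpoint instead, so $\Psi$ drops out entirely, at the price of a harmless $L^2$-small perturbation of the derivative. Both routes reduce to the same dominated-convergence step, powered by $\int_\Gamma K\,dQ<\infty$.

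The one point where your outline stops short is the bounded-domain case: neither your affine shift nor the paper's translation stays in a general compact $\Omega$ (and this already fails for convex $\Omega$, not only non-convex ones, contrary to your caveat). The paper fills this gap with an explicit construction: connect $\gamma_n(0)$ to $\gamma(0)$ by a constant-speed geodesic $\sigma_n$ inside $\Omega$, run $\sigma_n$ backward on $[0,\ve_n]$, then run $\tilde\gamma$ at double speed on $[\ve_n,2\ve_n]$ to catch up, and finally follow $\tilde\gamma$ on $[2\ve_n,T]$; choosing $\ve_n=\sqrt{\delta_n}$ with $\delta_n=d_{\mathrm{geod}}(\gamma_n(0),\gamma(0))^2$ makes the extra kinetic cost vanish. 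You should either supply this (or an analogous) construction, or restrict your claim to the boundaryless case.
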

\begin{proof}	For simplicity, we first give a proof in the case where $\Omega$ has no boundary (i.e. if it is the whole space $\R^d$ or $\bbt^d$. Let $(\gamma_n)_{n\geq 1}$ be a uniformly converging sequence of curves satisfying
	\[\gamma_n\to \gamma,\qquad F(\gamma_n,Q)=\inf_{\substack{w\in \Gamma\\\omega(0)=\gamma_n(0)}}F(\omega,Q)<\infty. \]
	Then, for any other curve $\tilde{\gamma}\in\Gamma$ with $\tilde{\gamma}(0)=\gamma(0)$, we have 
	\[F(\tilde{\gamma}_n,Q)\geq F({\gamma}_n,Q),\quad \forall n\geq 1, \] 
	where $\tilde{\gamma}_n$ is a translation of $\tilde\gamma$ and $\gamma_n(0)=\tilde{\gamma}_n(0)$. Moreover, since each $\tilde\gamma_n$ is a translation of $\tilde\gamma$, the time derivatives $\tilde\gamma_n'$ and $\tilde\gamma'$ are equal for all $n$. Therefore, all derivative terms in $F(\tilde{\gamma}_n,Q)$ are fixed and we can apply the dominated convergence theorem:
	\[\begin{aligned}
	\lim_{n\to\infty}F(\tilde\gamma_n,Q)=F(\tilde\gamma,Q).
	\end{aligned} \]
	For the upper bound of $F(\gamma,Q)$, we recall Remark \ref{r4.1} that the function $F(\cdot,Q)$ is lower semicontinuous in $\Gamma$: 
	\[\liminf_{n\to\infty} F(\gamma_{n},Q)\geq F(\gamma,Q). \]
	Hence, combining these two estimates, we can conclude that $\gamma$ is also a minimizer of $F(\cdot,Q)$ among all curves with same initial position. \\
	
	\noindent This construction cannot be applied as it is in the case of a bounded domain with boundary, as $\tilde\gamma_n$ could get out of the domain. In this case we propose a different construction, which could also be applied to the case without boundary but is slightly more involved. The starting point is the following: for every two points $x_0,x_1\in\Omega$ there exists a constant-speed geodesic $\sigma:[0,1]\to\Omega$ with $\sigma(0)=x_0,~\sigma(1)=x_1$ and $\int_0^1 |\sigma'(t)|^2dt= d_{\mathrm{geod}}(x_0,x_1)^2$. Moreover, the geodesic distance is equivalent, in smooth domains, to the Euclidean one, so that, if we fix a point $x_0$ and we replace $x_1$ by a sequence $x_n\to x_0$, we obtain a sequence of curves $\sigma_n$ with $\int_0^1 |\sigma_n'(t)|^2dt\to 0$. We apply this to the points $x_0=\gamma(0)$ and $x_n=\gamma_n(0)$ above and set $\delta_n:= \int_0^1 |\sigma_n'(t)|^2dt$. Then, we choose a sequence $\ve_n\to 0$, and we define
	$$\tilde\gamma_n(t):=\begin{cases} \sigma_n(1-t/\ve_n)&\mbox{ if }t\leq \ve_n\\
								\tilde\gamma(2(t-\ve_n))&\mbox{ if }\ve_n\leq t\leq 2\ve_n\\
								\tilde\gamma(t)&\mbox{ if }t\geq  2\ve_n\end{cases}.$$
We can see that $\tilde\gamma_n$ is a curve starting from $x_n=\gamma_n(0)$ and is hence a competitor for $\gamma_n$. We only need to prove that we have $\lim_n F(\tilde\gamma_n,Q)\leq F(\tilde\gamma,Q)$. Since $\tilde\gamma_n$ and $\tilde\gamma$ coincide on $[2\ve_n,T]$, this only amounts to show that the integral terms on $[0,\ve_n]$ and on $[\ve_n,2\ve_n]$ in the energy of $\tilde\gamma_n$ tend to $0$. Using
$$\int_0^{\ve_n}|\tilde\gamma_n'(t)|^2dt=\ve_n^{-1}\int_0^1 |\sigma_n'(t)|^2dt,\quad \int_{\ve_n}^{2\ve_n}|\tilde\gamma_n'(t)|^2dt=2\int_0^{2\ve_n}|\tilde\gamma'(t)|^2dt,$$
we see that we only need to choose $\ve_n\to 0$ such that $\delta_n/\ve_n\to 0$, which can be obtained by {}{choosing} $\ve_n=\sqrt{\delta_n}$.
\end{proof}

To sum up, we obtain the following:

\begin{proposition}\label{mineq4}
	Under the standing assumptions of the model, and supposing that $\eta$ is an even function, then any (local) minimizer $Q_0$ of $\mathcal{J}$ in $\mathcal{Q}_{m_0}$ is an equilibrium for the mean-field game $MFG(\Omega,\Psi,\delta,\eta,\lambda,m_0)$.   \end{proposition}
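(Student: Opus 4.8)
The plan is to read this proposition as the concatenation of the three preceding lemmas, which have already isolated every nontrivial step; what remains is to check that their hypotheses are met by a (local) minimizer and to chain the conclusions. Concretely, starting from a local minimizer $Q_0$ of $\mathcal{J}$ on $\mathcal{Q}_{m_0}$, I would first record that $\mathcal{J}(Q_0)<\infty$: the infimum is finite because the still configuration $\still_{\#}m_0$ has zero velocity, hence $\mathcal{V}=0$ and a finite value of $\mathcal{K}_{\delta,\Psi}$, providing a competitor of finite energy, so that all the integrals appearing below are meaningful.

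Next, I would invoke Lemma \ref{mineq1}: since $\eta$ is assumed even, its conclusion \eqref{equil2} holds, namely $\int_{\Gamma} F(\gamma,Q_0)\,dQ(\gamma)\ge \int_{\Gamma} F(\gamma,Q_0)\,dQ_0(\gamma)$ for every $Q\in\mathcal{Q}_{m_0}$. The point worth stressing is that the perturbation used there is the convex combination $Q_\varepsilon=(1-\varepsilon)Q_0+\varepsilon Q$ with $\varepsilon\to 0^+$, so only a first-order (one-sided) comparison is exploited; this is exactly why \emph{local} minimality already suffices, and why the quadratic dependence of $\mathcal{V}$ on $Q$ (the second-order term $\varepsilon^2\mathcal{I}_2$) does not obstruct the argument.

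I would then observe that inequality \eqref{equil2} is verbatim the standing hypothesis \eqref{assump} of Lemma \ref{mineq2}. Feeding it in, together with $\mathcal{J}(Q_0)<\infty$ and $Q_0\in\mathcal{Q}_{m_0}$, yields \eqref{equil3}, i.e. that $F(\cdot,Q_0)$ attains its infimum among curves with the prescribed initial point for $Q_0$-almost every $\gamma$. Finally, Lemma \ref{mineq3} upgrades this $Q_0$-almost-everywhere optimality to optimality at \emph{every} point of $\spt(Q_0)$, which is precisely the second half of \eqref{equil}; combined with the finiteness of $\int_{\Gamma} F(\gamma,Q_0)\,dQ_0(\gamma)$ (a consequence of $\mathcal{J}(Q_0)<\infty$, via the implication \eqref{KimplV}) and with $e_{0\#}Q_0=m_0$, this is exactly the definition of an equilibrium of $MFG(\Omega,\Psi,\delta,\eta,\lambda,m_0)$.

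Since all the analytic difficulty has been absorbed into the lemmas, I do not expect a genuine obstacle in this final assembly; the only points requiring care are the bookkeeping on finiteness, ensuring that every integral splitting is legitimate (guaranteed by \eqref{KimplV}), and the verification that ``local'' minimality is not weaker than what Lemma \ref{mineq1} consumes. Both are handled by the one-sided, infinitesimal nature of the competitor construction.
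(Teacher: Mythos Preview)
Your proposal is correct and follows exactly the paper's own approach: the proof in the paper is the single sentence ``This is a consequence of Lemmas \ref{mineq1}, \ref{mineq2} and \ref{mineq3},'' and you have simply spelled out the chaining of these three lemmas in detail. The additional remarks you include---on why local minimality already suffices for Lemma \ref{mineq1} and on the finiteness bookkeeping via \eqref{KimplV}---are helpful glosses but add nothing beyond what the lemmas themselves establish.
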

\begin{proof}
This is a consequence of Lemmas \ref{mineq1}, \ref{mineq2} and \ref{mineq3}.
\end{proof}
{}{We now prove the existence of such minimizer of $\mathcal{J}$. This can be done by considering a $\mathcal{J}$-minimizing sequence of measures $(Q_n)_{n\ge1}$ and showing its tightness. Then, the following lemma is necessary to complete this minimizing sequence argument. }
\begin{lemma}\label{L4.3}
	Let $X$ be any Polish space and $(Q_n)_{n\geq1}$ be a sequence of $\mathcal{P}(X)$ converges to $Q_\infty$. Then, $(Q_n\otimes Q_n)_{n\geq 1}$ weakly converges to $Q_\infty\otimes Q_\infty\in\mathcal{P}(X\times X)$ up to subsequence.
\end{lemma}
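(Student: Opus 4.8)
The plan is to combine a tightness/compactness argument based on Prokhorov's theorem with an identification of the limit using test functions of product form.

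First I would establish tightness. Since $X$ is Polish and $Q_n\rightharpoonup Q_\infty$, Prokhorov's theorem guarantees that $\{Q_n\}_{n}$ is tight: for every $\varepsilon>0$ there is a compact set $K_\varepsilon\subseteq X$ with $Q_n(K_\varepsilon)\geq 1-\varepsilon$ for all $n$. Since $K_\varepsilon\times K_\varepsilon$ is compact in $X\times X$ and
$$(Q_n\otimes Q_n)(K_\varepsilon\times K_\varepsilon)=Q_n(K_\varepsilon)^2\geq (1-\varepsilon)^2\geq 1-2\varepsilon,$$
the family $\{Q_n\otimes Q_n\}_n$ is itself tight, hence (again by Prokhorov) relatively compact for the weak convergence in $\mathcal{P}(X\times X)$. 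In particular some subsequence $Q_{n_k}\otimes Q_{n_k}$ converges weakly to a limit $R\in\mathcal{P}(X\times X)$.

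Next I would identify $R=Q_\infty\otimes Q_\infty$. For any pair $f,g\in C_b(X)$ the function $(x,y)\mapsto f(x)g(y)$ belongs to $C_b(X\times X)$, and the product structure gives the factorization
$$\int_{X\times X} f(x)g(y)\,d(Q_n\otimes Q_n)=\Big(\int_X f\,dQ_n\Big)\Big(\int_X g\,dQ_n\Big).$$
Letting $n=n_k\to\infty$ and using $Q_{n_k}\rightharpoonup Q_\infty$ on each factor, the right-hand side converges to $(\int_X f\,dQ_\infty)(\int_X g\,dQ_\infty)=\int f(x)g(y)\,d(Q_\infty\otimes Q_\infty)$, while the left-hand side converges to $\int f(x)g(y)\,dR$. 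Hence $R$ and $Q_\infty\otimes Q_\infty$ assign the same integral to every product function $f\otimes g$.

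It then remains to upgrade agreement on product functions to equality of measures, and this is the only genuinely delicate point, which I expect to be the main obstacle: on a non-compact Polish product the product functions are not sup-norm dense in $C_b(X\times X)$, so one cannot conclude directly. Instead I would argue that the linear span of $\{f\otimes g:f,g\in C_b(X)\}$ is an algebra of bounded functions containing the constants and stable under multiplication, since $(f_1\otimes g_1)(f_2\otimes g_2)=(f_1f_2)\otimes(g_1g_2)$; taking $g\equiv 1$ (and symmetrically) shows it generates the product Borel $\sigma$-algebra, which equals $\mathcal{B}(X\times X)$ because $X$ is second countable. By the functional monotone class (multiplicative system) theorem, two finite measures agreeing on such a multiplicative generating family agree on all bounded Borel functions, whence $R=Q_\infty\otimes Q_\infty$. (Alternatively one may invoke inner regularity: both measures are tight, and on each compact product $K\times K$ Stone--Weierstrass makes product functions uniformly dense, so their integrals coincide on $C_b$ up to an error controlled by the tail mass.) Finally, since the limit does not depend on the chosen subsequence, the standard subsequence-of-subsequence argument in fact promotes the convergence to the whole sequence, which is slightly stronger than the stated claim.
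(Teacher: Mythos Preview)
Your proof is correct and shares the same skeleton as the paper's: both deduce tightness of $\{Q_n\otimes Q_n\}$ from tightness of $\{Q_n\}$ via Prokhorov, extract a weak limit, and then identify that limit with $Q_\infty\otimes Q_\infty$ by exploiting product test functions. The only real difference lies in the identification step. Your primary route is the functional monotone class (multiplicative system) theorem applied to the algebra generated by $f\otimes g$, which is clean and avoids any compactness restriction on the test functions. The paper instead takes precisely the alternative you sketch parenthetically: it fixes a compact $K\times K$ capturing most of the mass, uses Stone--Weierstrass there to approximate an arbitrary nonnegative $\phi\in C_b(X\times X)$ by finite sums $\sum f_ig_i$, and obtains the one-sided inequality $\mu\le Q_\infty\otimes Q_\infty$ (using that $Q_n|_K$ subconverges to some $\nu_K\le Q_\infty$), concluding by the fact that both are probability measures. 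Your monotone class argument is arguably more direct and yields equality in one stroke; the paper's approach is more elementary in that it only invokes Stone--Weierstrass, at the cost of the extra restriction-and-inequality maneuver. Your final remark that the full sequence converges (by uniqueness of subsequential limits) is also correct and slightly sharpens the stated lemma.
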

We provide a proof of this result, which is quite classical, as the easiest proof which are available in the literature (by density of suitably separable functions) are only adapted to the case of a compact space $X$.
\begin{proof}
	Since $X$ and $X\times X$ are Polish spaces, the sequential compactness is equivalent to the tightness and closedness, as a consequence of Prokhorov theorem. Thus, for every $\varepsilon>0$ there exists a compact set $K_\varepsilon\subset X$ such that 
	\[Q_n(X\setminus K_\varepsilon)<\varepsilon,\quad \forall ~n\geq 1. \]
	This allows to deduce tightness for the sequence $(Q_n\otimes Q_n)_n$, since 
	\[(Q_n\otimes Q_n)(X\times X\setminus K_\varepsilon\times K_\varepsilon)<2\varepsilon,\quad \forall Q_n\geq 1. \]
	 Therefore, by using Prokhorov theorem and taking a subsequence if necessary, a limit $\mu$ of $(Q_n\otimes Q_n)_{n\geq1}$ exists in $\mathcal{P}(X\times X)$, and we claim that $Q_\infty\otimes Q_\infty$ is equal to $\mu$, i.e., 
	\begin{equation}\label{claim2}
	\int_{X\times X}\phi ~d\mu=\int_{X\times X}\phi ~d(Q_{\infty}\otimes Q_{\infty}),\quad \forall~ \phi\in C_b(X\times X).
	\end{equation}	First, note that we may assume $\phi\geq 0$ in \eqref{claim2}, since $\phi$ is assumed to be bounded and both $\mu$ and $ Q_\infty\otimes Q_\infty$ are probability measures. Then for any $\varepsilon>0$, we consider a compact set $K=K_\varepsilon$ and an approximation $\overline\phi$ of $\phi|_{K\times K}$ satisfying 
	\[ \|\overline\phi-\phi\|_{L^\infty(K\times K)}<\varepsilon,\quad \overline\phi=\sum_{i=1}^{m} f_ig_i,\quad f_i,g_i~\mbox{continuous functions on}~K. \]
	This is possible thanks to a standard use of the Stone-Weierstrass theorem, but requires compactness.
Then, we have	
	\begin{align}\label{SW}
	\begin{aligned}
	\int_{X\times X} \phi~ d(Q_{n}\otimes Q_{n})&\leq 2\varepsilon\|\phi\|_{L^\infty}+\int_{K\times K} \phi~ d(Q_{n}\otimes Q_{n})\\
	&\leq 2\varepsilon\|\phi\|_{L^\infty}+\varepsilon+\int_{K\times K} \overline\phi~ d(Q_{n}\otimes Q_{n})\\
	&=2\varepsilon\|\phi\|_{L^\infty}+\varepsilon+\sum_{i=1}^m\left(\int_K f_i~dQ_{n} \right)\left(\int_K g_i~dQ_{n} \right),
	\end{aligned}
	\end{align}
	and as $(Q_{n}|_K)_{n\geq 1}$ is obviously a tight sequence of measures, it converges (up to subsequences) to a measure $\nu_K$, supported on $K$, and satisfying $\nu_K\leq Q_\infty$.
	Thus, we take the limit $n\to \infty$ in \eqref{SW} to deduce
	\[\begin{aligned}
		\int_{X\times X} \phi~ d\mu&\leq 2\varepsilon\|\phi\|_{L^\infty}+\varepsilon+\int_{K\times K}\overline\phi d (\nu_K\otimes \nu_K)\\
	&\leq 2\varepsilon\|\phi\|_{L^\infty}+2\varepsilon+\int_{K\times K}\phi d (\nu_K\otimes \nu_K)\\
	& \leq 2\varepsilon\|\phi\|_{L^\infty}+2\varepsilon+\int_{X\times X}\phi d (Q_\infty\otimes Q_\infty),
	\end{aligned} \]
	for every nonnegative bounded continuous function $\phi$ and $\varepsilon>0$. Sending $\ve\to 0$ and using the arbitrariness of $\phi$ we deduce 
	\[\mu\leq Q_\infty\otimes Q_\infty. \]
	Finally, since $\mu$ and $Q_\infty\otimes Q_\infty$ are probability measures in $X\times X$, we conclude $\mu=Q_\infty\otimes Q_\infty$. 	
\end{proof}

\begin{theorem}
Under the standing assumptions of the model, and supposing that $\eta$ is even function, then there exists a equilibrium probability measure on curves $Q_\infty$ in $\mathcal{Q}_{m_0}$ of the mean-field game $MFG(\Omega,\Psi,\delta,\eta,\lambda,m_0)$.  
\end{theorem}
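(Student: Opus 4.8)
The plan is to produce the equilibrium as a \emph{global minimizer} of $\mathcal{J}$ over $\mathcal{Q}_{m_0}$, obtained by the direct method, and then to invoke Proposition \ref{mineq4} to upgrade the minimizer to an equilibrium. First I would check that $\inf_{\mathcal{Q}_{m_0}}\mathcal{J}$ is a finite real number. It is $<+\infty$ because the ``still'' measure $\still_\#m_0\in\mathcal{Q}_{m_0}$ is concentrated on constant curves, for which $K=0$ and $V=0$, so that $\mathcal{J}(\still_\#m_0)=2\int_\Omega\Psi\,dm_0$, which is finite under the standing assumptions. It is bounded below because, integrating the inequality of Lemma \ref{Kkin} and using the positivity of $V$ (hence of $\mathcal{V}$), together with $e_{0\#}Q=m_0$ to rewrite $\int_\Gamma\Psi(\gamma(0))\,dQ=\int_\Omega\Psi\,dm_0$, one gets
\[
\mathcal{J}(Q)\ \geq\ 2\mathcal{K}_{\delta,\Psi}(Q)\ \geq\ \delta\int_\Gamma K(\gamma)\,dQ(\gamma)+2\int_\Omega\Psi\,dm_0-2C .
\]
I then fix a minimizing sequence $(Q_n)\subset\mathcal{Q}_{m_0}$ with $\mathcal{J}(Q_n)$ bounded.

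The core of the argument is the \emph{tightness} of $(Q_n)$. The previous display, applied to $Q_n$, yields a uniform bound $\int_\Gamma K(\gamma)\,dQ_n(\gamma)\leq M$ on the average kinetic energy. Now the sublevel sets
\[
\Gamma_{R,B}:=\{\gamma\in\Gamma:\ K(\gamma)\leq R,\ \gamma(0)\in B\}
\]
are compact in $\Gamma$ whenever $B\subset\Omega$ is compact: curves with $\int_0^T|\gamma'|^2\leq 2R$ are uniformly $\tfrac12$-H\"older by Cauchy--Schwarz, hence equicontinuous, and are uniformly bounded since they start in $B$, so Ascoli--Arzel\`a gives relative compactness, while lower semicontinuity of $K$ and continuity of $e_0$ give closedness. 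Choosing, for $\varepsilon>0$, a compact $B$ with $m_0(\Omega\setminus B)<\varepsilon$ (tightness of the single measure $m_0$) and using Markov's inequality $Q_n(\{K>R\})\leq M/R$, I obtain $Q_n(\Gamma\setminus\Gamma_{R,B})\leq M/R+\varepsilon$ for all $n$, which is $<2\varepsilon$ for $R$ large. Hence $(Q_n)$ is tight, and by Prokhorov's theorem a subsequence converges weakly to some $Q_\infty\in\mathcal{P}(\Gamma)$; by the closedness of $\mathcal{Q}_{m_0}$ (Proposition \ref{P2.2}) we have $Q_\infty\in\mathcal{Q}_{m_0}$.

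It remains to pass to the limit, i.e. to prove $\mathcal{J}(Q_\infty)\leq\liminf_n\mathcal{J}(Q_n)$. Since $Q_n\rightharpoonup Q_\infty$, Lemma \ref{L4.3} gives $Q_n\otimes Q_n\rightharpoonup Q_\infty\otimes Q_\infty$ along the subsequence, and $\mathcal{J}(Q)=\int_{\Gamma\times\Gamma}J\,d(Q\otimes Q)$. The integrand $J$ is lower semicontinuous by Lemma \ref{L3.1}, but it is not globally bounded below when $\Psi$ is merely Lipschitz, which is the one delicate point. I would get around it by subtracting the continuous function $(\gamma,\tilde\gamma)\mapsto\Psi(\gamma(0))+\Psi(\tilde\gamma(0))$: by Lemma \ref{Kkin},
\[
\tilde J(\gamma,\tilde\gamma):=J(\gamma,\tilde\gamma)-\Psi(\gamma(0))-\Psi(\tilde\gamma(0))+2C\ \geq\ \tfrac{\delta}{2}K(\gamma)+\tfrac{\delta}{2}K(\tilde\gamma)+\lambda V(\gamma,\tilde\gamma)\ \geq\ 0
\]
is a nonnegative lower semicontinuous integrand (lsc plus continuous is lsc), so the standard semicontinuity of integrals against weakly converging measures applies and yields $\liminf_n\int\tilde J\,d(Q_n\otimes Q_n)\geq\int\tilde J\,d(Q_\infty\otimes Q_\infty)$. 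Since every $Q\in\mathcal{Q}_{m_0}$ satisfies $\int\tilde J\,d(Q\otimes Q)=\mathcal{J}(Q)-2\int_\Omega\Psi\,dm_0+2C$, with the subtracted term a fixed finite constant, this is precisely the lower semicontinuity of $\mathcal{J}$ on $\mathcal{Q}_{m_0}$.

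Therefore $\mathcal{J}(Q_\infty)\leq\liminf_n\mathcal{J}(Q_n)=\inf_{\mathcal{Q}_{m_0}}\mathcal{J}$, so $Q_\infty$ is a global, hence in particular local, minimizer of $\mathcal{J}$ in $\mathcal{Q}_{m_0}$. By Proposition \ref{mineq4}, $Q_\infty$ is an equilibrium of $MFG(\Omega,\Psi,\delta,\eta,\lambda,m_0)$, which concludes the proof. I expect the main obstacle to be the tightness step, specifically the passage from the average-kinetic-energy bound to compactness in $\Gamma$, with the non-boundedness-below of $J$ in the Lipschitz case as a secondary technical nuisance handled by the shift to $\tilde J$.
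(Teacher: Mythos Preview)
Your proof is correct and follows the same route as the paper: direct method on $\mathcal{J}$, tightness via a kinetic-energy bound plus control of the initial point, Prokhorov, Lemma~\ref{L4.3} for the product measures, lower semicontinuity of the integrand $J$ from Lemma~\ref{L3.1}, and then Proposition~\ref{mineq4}. The only differences are cosmetic: the paper phrases tightness with a function $W$ having compact sublevels and the sets $\{W(\gamma(0))\le M,\ K_{\delta,\Psi}(\gamma)\le M\}$, while you use $\{K(\gamma)\le R,\ \gamma(0)\in B\}$ directly; both are the same Markov/Ascoli--Arzel\`a argument.

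One point where your write-up is actually more careful than the paper's: the paper simply asserts $\mathcal{J}(Q_\infty)\le\liminf_n\mathcal{J}(Q_n)$ from the lsc of $J$ and $Q_n\otimes Q_n\rightharpoonup Q_\infty\otimes Q_\infty$, but when $\Psi$ is merely Lipschitz on $\mathbb{R}^d$ the integrand $J$ is not bounded below, so this step is not automatic. Your device of passing to $\tilde J=J-\Psi(\gamma(0))-\Psi(\tilde\gamma(0))+2C\ge 0$ and using that $\int\Psi(\gamma(0))\,dQ=\int\Psi\,dm_0$ is a \emph{fixed} finite constant on $\mathcal{Q}_{m_0}$ is exactly the right way to close this small gap.
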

\begin{proof}
	It suffices to show that the functional $\mathcal{J}$ restricted to $\mathcal{Q}_{m_0}$ has a minimizer. 
	
	
	Let us take a minimizing sequence $(Q_n)_{n\geq 1}\subset \mathcal{Q}_{m_0}$ of $\mathcal{J}$, i.e., 
	\[\mathcal{J}(Q_1)\geq \mathcal{J}(Q_2)\geq \cdots,\quad  \lim_{n\to\infty}\mathcal{J}(Q_n)=\inf_{Q\in \mathcal{P}(\Gamma)}\mathcal{J}(Q)<\infty.  \]
	The fact that the infimum is finite comes from  $\mathcal{J}(\still_{\#}m_0)=2\int_{\Omega}\Psi(x)dm_0(x)<\infty$.
	Now, we claim that the collection $\{Q_n:n\geq 1\}$ is tight. To verify this, we define  the set $\Gamma_M$ as
	\[\Gamma_M:= \left\{\gamma\in \Gamma: W(\gamma(0))\leq M,\quad K_{\delta,\Psi}(\gamma)\leq M \right\}, \] 
	where $W:\mathbb{R}^d\to(0,\infty)$ is a continuous function satisfying 
	\[\int_\Omega W(x)dm_0(x)=:C_W<\infty,\quad \left\{x:W(x)\leq b \right\} ~\mbox{is compact}~\forall~ b>0.  \] Then, $\Gamma_M$ is a bounded subset of $H^1$ for $H^1$-norm (since the bound on $W$ provides a bound on $\gamma(0)$ and the bound on $K_{\delta,\Psi}$ on the kinetic energy) and therefore $\Gamma_M$ is compact in $\Gamma$. 
	
	Now, we have 
	$$Q(\{\gamma\,:\,W(\gamma(0))>M\})=m_0(\{W>M\})\leq \frac{C_W}{M}$$
	and
	$$Q(\{\gamma\,:\,K_{\delta,\Psi}(\gamma)>M\})\leq \frac1M \int K_{\delta,\Psi}dQ\leq \frac{1}{2M}\mathcal J(Q).$$
	Therefore, 
	from Prokhorov theorem, the sequence $(Q_n)_{n\geq 1}$ has a converging subsequence $(Q_{n_i})_{i\geq 1}$ to some probability measure ${Q}_\infty\in\mathcal{P}(\Gamma)$, and this is indeed contained in $\mathcal{Q}_{m_0}$ by the closedness in Proposition \ref{P2.2}. 
	Moreover, by using Lemma \ref{L4.3}, the corresponding sequence of product measures $(Q_{n_i}\otimes Q_{n_i})$ converges to $Q_\infty\otimes Q_\infty$. Hence, we have 
	\[\mathcal{J}(Q_\infty)=\int_{\Gamma\times \Gamma} J d(Q_\infty\otimes Q_\infty)\leq \liminf_{n\to\infty} \int_{\Gamma\times \Gamma} J d(Q_n\otimes Q_n)=\inf_{Q\in \mathcal{P}(\Gamma)}\mathcal{J}(Q), \]
	and conclude that $Q_\infty$ is the desired minimizer, which is an equilirbium thanks to Proposition \ref{mineq4}.
\end{proof}

\section{Analysis of optimal curves and equilibria}

Throughout the section, we will occasionally need to add some assumptions to the standing ones, either on the kernel $\eta$ or on the domain $\Omega$. In part of the analysis we will indeed need to add the simplifying assumption that $\Omega$ has no boundary (i.e., either we are on the torus or on the whole space $\R^d$;\ of course other Riemannian manifold settings could be considered but we will stick to the Euclidean case) and/or that the communication weight $\eta$ is strictly positive and smooth, satisfying $|\nabla\eta|\leq C\eta$ for a finite constant $C$. This last assumption is of course satisfied in the compact case of the torus as soon as $\eta>0$ and $\eta$ is Lipschitz continuous. The final cost $\Psi$ can also be supposed to be Lipschitz continuous (instead of Lipschitz or bounded). On the other hand, it is not necessary for our analysis to suppose that $\eta$ is even, which is crucial for existence purposes but not for regularity. 

It is important to underline now that any equilibrium $Q$ that we analyze in this section is concentrated on $H^1$ curves, which will be useful in particular to obtain a.e. differentiability.
\subsection{Relevant macroscopic quantities}

For a better understanding of the problem we define the following quantities.

\begin{eqnarray*}
	M_1(t)&:=&\int_\Gamma |\omega'(t)|dQ(\omega),\\
	M_2(t)&:=&\int_\Gamma |\omega'(t)|^2dQ(\omega),\\
	a(t,x)&:=&\int_\Gamma \eta(x-\omega(t))dQ(\omega),\\
	u(t,x)&:=&\frac{1}{a(t,x)}\int_\Gamma \omega'(t)\eta(x-\omega(t))dQ(\omega),\\
	\sigma(t,x)&:=&\int_\Gamma |\omega'(t)-u(t,x)|^2 \eta(x-\omega(t))dQ(\omega).
\end{eqnarray*}

The quantities $M_1$ and $M_2$ represent the first and second moment of the velocity at time $t$, respectively, according to the distribution $Q$. It is important to note that we have $M_1^2\leq M_2$ and $\int_0^T M_2(t)dt<\infty$.

As far as $a,u$ and $\sigma$ are concerned, we note that $a(t,x)$ represent a local amount of mass around $x$, $u(t,x)$ the average velocity in the same neighborhood, and $\sigma(t,x)$ is a measure of how much the velocity is disordered around the same point. We will see the role that these quantities play in the minimization problem $\min_\gamma F(\gamma,Q)$. It is important to note that $u$ and $\sigma$ are only defined for those times $t$ such that $Q-$a.e. curve is differentiable at time $t$, which is true for a.e. $t$, since $Q$ is concentrated on curves which are almost a.e. differentiable. This is an easy consequence of the fact that the set of pairs $(t,\gamma)$ such that $\gamma$ is not differentiable at time $t$ is a Borel set in $[0,T]\times\Gamma$ and of a Fubini theorem.\smallskip

We start writing some simple inequalities about these functions, that we summarize in the following lemma.
\begin{lemma}
	We have the following inequalities on the values of $a,u,\sigma$, where $C$ denotes a universal constant, only depending on $\eta$.
	$$a\leq C,\quad a|u|\leq CM_1, \quad a|u|^2\leq CM_2,\quad \sigma\leq CM_2.$$
\end{lemma}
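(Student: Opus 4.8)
The plan is to set $C=\sup_{\R^d}\eta$, which is finite by the standing assumptions on $\eta$, and to establish the four inequalities one after another, each reducing to the boundedness and nonnegativity of $\eta$ combined with an elementary integral estimate. The whole lemma is of a computational nature, so I expect no genuine obstacle; the only slightly conceptual point is to recognize $\sigma$ as a weighted variance.

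For the first bound I would just use $\eta\leq C$ pointwise together with the fact that $Q$ is a probability measure, which gives $a(t,x)=\int_\Gamma\eta(x-\omega(t))\,dQ(\omega)\leq C$ at once. For the second, the definition of $u$ gives $a(t,x)u(t,x)=\int_\Gamma\omega'(t)\eta(x-\omega(t))\,dQ(\omega)$, so passing the norm inside the integral and bounding $\eta$ by $C$ yields
\[
a|u|=\Bigl|\int_\Gamma\omega'(t)\eta(x-\omega(t))\,dQ(\omega)\Bigr|\leq C\int_\Gamma|\omega'(t)|\,dQ(\omega)=C M_1.
\]

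The third bound I would get from Jensen's inequality. At times $t$ and points $x$ where $a(t,x)>0$, the measure $d\mu:=\frac{\eta(x-\omega(t))}{a(t,x)}\,dQ(\omega)$ is a probability measure on $\Gamma$, and by construction $u(t,x)=\int_\Gamma\omega'(t)\,d\mu(\omega)$; convexity of $v\mapsto|v|^2$ then gives $|u|^2\leq\int_\Gamma|\omega'(t)|^2\,d\mu(\omega)$, that is $a|u|^2\leq\int_\Gamma|\omega'(t)|^2\eta(x-\omega(t))\,dQ(\omega)\leq C M_2$. (Equivalently one may invoke the Cauchy–Schwarz inequality applied to $\omega'\sqrt\eta$ and $\sqrt\eta$.)

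Finally, for $\sigma\leq C M_2$ I would expand the square in its definition and use $\int_\Gamma\omega'(t)\eta\,dQ=au$ and $\int_\Gamma\eta\,dQ=a$ to obtain the variance identity
\[
\sigma(t,x)=\int_\Gamma|\omega'(t)|^2\eta(x-\omega(t))\,dQ(\omega)-a(t,x)|u(t,x)|^2.
\]
Since the subtracted term is nonnegative, this immediately yields $\sigma\leq\int_\Gamma|\omega'|^2\eta\,dQ\leq C M_2$, completing the proof. This last step is the only one that is not purely mechanical, but even here the work is just the standard expansion of a weighted variance.
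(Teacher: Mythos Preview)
Your proof is correct and follows essentially the same route as the paper: the first two bounds are identical, for $a|u|^2$ the paper phrases the argument as Cauchy--Schwarz with respect to the measure $\eta(x-\omega(t))\,dQ(\omega)$ (which you yourself note is equivalent to your Jensen formulation), and for $\sigma$ the paper reaches the same inequality $\sigma\leq\int|\omega'|^2\eta\,dQ$ by observing that $u$ minimizes $v\mapsto\int|\omega'-v|^2\eta\,dQ$ and comparing with $v=0$, which is just another way of stating your variance identity.
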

\begin{proof}
	The first inequality is trivial since $\eta$ is bounded, and the second comes is a similar way from the fact that we have $(au)(t,x)=\int_\Gamma \omega'(t)\eta(x-\omega(t))dQ(\omega)$. The inequality involving $|u|^2$ is less trivial, and we can proceed in the following way
	$$a|u|^2(t,x)=\frac{|(au)(t,x)|^2}{a(t,x)}=\frac{\left|\int_\Gamma \omega'(t)\eta(x-\omega(t))dQ(\omega)\right|^2}{\int_\Gamma \eta(x-\omega(t))dQ(\omega)}.$$
	Applying on the numerator the Cauchy-Schwartz inequality w.r.t. to the measure $\eta(x-\omega(t))dQ(\omega)$ to the functions $\omega\mapsto \omega'(t)$ and $\omega\mapsto 1$ we get
	$$\frac{\left|\int_\Gamma \omega'(t)\eta(x-\omega(t))dQ(\omega)\right|^2}{\int_\Gamma \eta(x-\omega(t))dQ(\omega)}\leq \frac{\int_\Gamma |\omega'(t)|^2 \eta(x-\omega(t))dQ(\omega)\cdot \int_\Gamma \eta(x-\omega(t))dQ(\omega)}{\int_\Gamma \eta(x-\omega(t))dQ(\omega)}\leq CM_2(t),$$
	where we finally use the boundedness of $\eta$.
	
	The inequality concerning $\sigma$ can be established using that
	$$\min_v \int_\Gamma |\omega'(t)-v|^2 \eta(x-\omega(t))dQ(\omega)$$
	is realized for $v=u(t,x)$, which is the average value of $\omega'$ for the measure  $\eta(x-\omega(t))dQ(\omega)$. Hence we have, taking $v=0$
	$$\sigma(t,x)=\int_\Gamma |\omega'(t)-u(t,x)|^2 \eta(x-\omega(t))dQ(\omega)\leq \int_\Gamma |\omega'(t)|^2 \eta(x-\omega(t))dQ(\omega)\leq CM_2,$$
	using once more the boundedness of $\eta$.
\end{proof}

Similar inequalities can also be established for the gradients of these functions, which share the same regularity as $\eta$ at least for those instants $t$ such that $M_1(t)$ and/or $M_2(t)$ are finite. For this result we will use the assumption on $\nabla \eta$.

\begin{lemma}
	Suppose that $\eta$ is strictly positive and Lipschitz continuous, and that there is a constant $C$ such that the inequality $|\nabla \eta(y)|\leq C\eta(y)$ holds for every $y\in \R^d$.
Then, we have the following inequalities on the values of the gradients of $a,u,\sigma$, where $C$ denotes a universal constant, only depending on $\eta$.
	$$\begin{aligned}
	&|\nabla_x a|\leq Ca\leq C,\quad |\nabla_x(au)|\leq Ca|u|\leq CM_1,\\
	&a|\nabla_x u|\leq Ca|u|\leq CM_1, \quad a|\nabla_x u|^2\leq CM_2,\quad |\nabla_x\sigma|\leq CM_2.
	\end{aligned}$$
\end{lemma}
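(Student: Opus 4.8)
The plan is to differentiate each of $a$, $au$, $u$ and $\sigma$ with respect to $x$ under the integral sign and to turn every resulting factor $\nabla\eta$ into $\eta$ through the standing hypothesis $|\nabla\eta|\le C\eta$, after which the already established bounds of the previous lemma (and the finiteness of $M_1(t),M_2(t)$) close the estimates. I would fix once and for all a time $t$ for which $Q$-a.e.\ curve is differentiable and $M_1(t),M_2(t)<\infty$ (which holds for a.e.\ $t$). Since the $x$-dependence enters only through the smooth factor $\eta(x-\omega(t))$, and the $x$-derivatives of the integrands are dominated by $C|\omega'(t)|^k\eta(x-\omega(t))$ with $k\in\{0,1,2\}$, which are $Q$-integrable by the moment bounds, differentiation under the integral sign is legitimate. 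The two easy cases come first: from $\nabla_x a=\int_\Gamma\nabla\eta(x-\omega(t))\,dQ(\omega)$ one gets $|\nabla_x a|\le C\int_\Gamma\eta\,dQ=Ca\le C$, and from $au=\int_\Gamma\omega'\eta\,dQ$ one gets $\nabla_x(au)=\int_\Gamma\omega'\otimes\nabla\eta\,dQ$, whence $|\nabla_x(au)|\le C\int_\Gamma|\omega'|\eta\,dQ\le C(\sup\eta)M_1=CM_1$.

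The crucial algebraic simplification occurs for $u=(au)/a$. Applying the product rule and then using that $u$ is the $\eta\,dQ$-weighted average of $\omega'$, the two terms reorganize into the centred expression
\[
a\,\nabla_x u=\int_\Gamma\big(\omega'(t)-u(t,x)\big)\otimes\nabla\eta(x-\omega(t))\,dQ(\omega),
\]
in which the factor $\omega'-u$ has zero $\eta\,dQ$-average. From here two estimates follow. The triangle inequality together with $|\nabla\eta|\le C\eta$ and $a|u|\le CM_1$ gives $a|\nabla_x u|\le C\int_\Gamma|\omega'-u|\eta\,dQ\le CM_1$, while the Cauchy--Schwarz inequality with respect to the measure $\eta\,dQ$, applied to $\omega\mapsto|\omega'-u|$ and $\omega\mapsto 1$, gives $a|\nabla_x u|\le C\,\sigma^{1/2}a^{1/2}$, that is $a|\nabla_x u|^2\le C\sigma\le CM_2$.

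Finally, for $\sigma$ the naive differentiation produces two contributions, one from the factor $\eta$ and one from the $u(t,x)$ appearing inside $|\omega'-u|^2$. The second equals $-2\,\nabla_x u\cdot\int_\Gamma(\omega'-u)\eta\,dQ$, and it vanishes identically because $\int_\Gamma(\omega'-u)\eta\,dQ=au-ua=0$; what survives is $\nabla_x\sigma=\int_\Gamma|\omega'-u|^2\,\nabla\eta\,dQ$, so $|\nabla_x\sigma|\le C\int_\Gamma|\omega'-u|^2\eta\,dQ=C\sigma\le CM_2$. The only genuinely delicate steps are precisely these two exact cancellations — recognizing that the product-rule remainder in $\nabla_x u$ collapses to a $(\omega'-u)$-centred integral, and that the $u$-derivative term in $\nabla_x\sigma$ disappears by the same zero-average property. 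Both are immediate consequences of $u$ being the weighted mean of $\omega'$, and once they are used every remaining inequality is a one-line application of $|\nabla\eta|\le C\eta$ combined with the bounds $a\le C$, $a|u|\le CM_1$, $\sigma\le CM_2$ from the previous lemma.
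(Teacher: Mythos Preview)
Your proof is correct and follows the same overall strategy as the paper --- differentiate under the integral sign and trade each $\nabla\eta$ for $C\eta$ --- but it is tidier in two places. First, you combine $\nabla_x(au)-u\nabla_x a$ into the centred expression $a\nabla_x u=\int(\omega'-u)\otimes\nabla\eta\,dQ$ and then apply Cauchy--Schwarz directly to obtain $a|\nabla_x u|^2\le C\sigma$; the paper instead bounds $a|\nabla_x u|$ by $|\nabla_x(au)|+|u||\nabla_x a|$ and then uses the algebraic identity $a|\nabla_x u|^2=(a|\nabla_x u|)^2/a$. Second, for $\nabla_x\sigma$ you observe that the term $2\nabla_x u\cdot\int(u-\omega')\eta\,dQ$ vanishes identically because $u$ is the $\eta\,dQ$-average of $\omega'$, whereas the paper keeps this term and bounds it via $|\nabla_x u|\le\sqrt{CM_2/a}$ and $\int|u-\omega'|\eta\,dQ\le\sqrt{\sigma a}$; your cancellation makes the argument shorter and yields the slightly sharper $|\nabla_x\sigma|\le C\sigma$. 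One small point: neither your argument nor the paper's actually establishes the intermediate chain $|\nabla_x(au)|\le Ca|u|$ (which fails when $u=0$ but the integrand $\omega'$ does not vanish); only the final bound $CM_1$ is justified, and that is all that is used downstream.
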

\begin{proof}
	The first inequalities are trivial because of the bound $|\nabla \eta|\leq C\eta$. The second come in a similar way using  the fact that we have $\nabla_x(au)(t,x)=\int_\Gamma \omega'(t)\nabla\eta(x-\omega(t))dQ(\omega)$. In order to estimate $a|\nabla_x u|$ we write $a|\nabla_x u|\leq |\nabla_x(au)|+|u||\nabla_x a|$ and we use the previous estimates. This allows also to estimate $a|\nabla_x u|^2$ using
	$$a|\nabla_x u|^2=\frac{(a|\nabla_x u|)^2}{a}\leq C\frac{(a|u|)^2}{a}=Ca|u|^2\leq CM_2(t).$$
	
	The inequality concerning $\nabla_x\sigma$ can be established using 
	$$\begin{aligned}
	\nabla_x\sigma(t,x)&=\int_\Gamma |\omega'(t)-u(t,x)|^2 \nabla_x\eta(x-\omega(t))dQ(\omega)\\
	&\hspace{0.4cm}+2 \nabla_x u(t,x)\cdot\int_\Gamma(u(t,x)-\omega'(t)) \eta(x-\omega(t))dQ(\omega).
	\end{aligned}$$
	The first term in the right hand side can be estimated by boundedness of $\nabla_x\eta$ in terms of $CM_2(t)$, while for the second we use
	$$|\nabla_x u|\leq \sqrt{\frac{CM_2(t)}{a(t,x)}},\quad \int_\Gamma |u(t,x)-\omega'(t)| \eta(x-\omega(t))dQ(\omega)\leq \sqrt{\sigma(t,x) a(t,x)},$$
	together with the estimate $\sigma\leq CM_2(t)$ that we already proved.
\end{proof}

We finish this section by underlining the following computation, valid for any vector $v$:
\begin{multline*}\int_\Gamma |v-\omega'(t)|^2\eta(x-\omega(t))dQ(\omega)=\int_\Gamma |v-u(t,x)+u(t,x)-\omega'(t)|^2\eta(x-\omega(t))dQ(\omega)\\=\int_\Gamma |v-u(t,x)|^2\eta(x-\omega(t))dQ(\omega)+\int_\Gamma |\omega'(t)-u(t,x)|^2\eta(x-\omega(t))dQ(\omega),\end{multline*}
where the mixed term $\int_\Gamma (v-u(t,x))\cdot(u(t,x)-\omega'(t))\eta(x-\omega(t))dQ(\omega)$ vanishes since $u(t,x)$ is the average value of $\omega'$ for the measure  $\eta(x-\omega(t))dQ(\omega)$. We then get
$$\int_\Gamma |v-\omega'(t)|^2\eta(x-\omega(t))dQ(\omega)=a(t,x)|v-u(t,x)|^2+\sigma(t,x).$$

This allows to re-write the optimization problem for $\gamma$ using
\begin{equation}\label{Fausigma}
F(\gamma,Q)=K_{\delta,\Psi}(\gamma)+\frac\lambda 2\int_0^T \left(a(t,\gamma(t))|\gamma'(t)-u(t,\gamma(t))|^2+\sigma(t,\gamma(t))\right)dt.
\end{equation}

\subsection{A formal coupled system of PDEs} 
From the previous formulation in \eqref{Fausigma}, the optimization problem for $\gamma$ can be written as
$$\min \int_0^T L(t,\gamma(t),\gamma'(t);Q)dt+\Psi(\gamma(T)),$$
where the dependence on $Q$ only happens through $a$, $u$ and $\sigma$. More precisely, we have
$$L(t,x,v;Q)=\frac\delta 2|v|^2+\frac\lambda 2 \big(a(t,x)|v-u(t,x)|^2+\sigma(t,x)\big).$$
The Hamiltonian corresponding to $L$ is given by
$$H(t,x,p;Q):=\sup_v\; p\cdot v -L(t,x,v;Q)=\frac{|p+\lambda au|^2}{2(\delta+\lambda a)}-\frac\lambda 2(a|u|^2+\sigma),$$
where we omitted the dependence of $a,u$ and $\sigma$ on $(t,x)$. In the above maximization, the optimal $v$ for given $p$ is 
$$v= \frac{p+\lambda au}{\delta+\lambda a}.$$
Then, if we define the value function 
$$\varphi(t_0,x_0):=\min\left\{\int_{t_0}^T L(t,\gamma(t),\gamma'(t);Q)dt+\Psi(\gamma(T))\;:\;\gamma(t_0)=x_0\right\},$$
it is well known from classical dynamic programming arguments that $\varphi$ solves a Hamilton-Jacobi equation
\[\begin{aligned}
-\partial_t\varphi+\frac{\left|-\nabla \varphi+\lambda a u \right|^2}{2(\delta+\lambda a)}&=\frac{\lambda}{2}(a|u|^2+\sigma),\\
\varphi(x,T)&=\Psi(x),
\end{aligned} \]
and the optimal trajectory $\gamma$ solve
\[\gamma'(t)=v(\gamma(t),t),\quad v(x,t)=\frac{-\nabla \varphi(x,t)+\lambda (a u)(x,t)}{\delta+\lambda a(x,t)}. \]
On the other hand,  if we know the velocity field $v_t$ of agents for given agent density $\rho_t$, we can obtain the continuity equation:
\[\partial_t\rho+\nabla\cdot(\rho v)=0. \]
Therefore, once we assume that $Q$ is an equilibrium, the corresponding $\rho$ at time $t$ satisfies  
\[\rho_t(x)dx=d(e_{t\#}Q)(x), \]
and 
\[\begin{aligned}
a(x,t)&=\int_{\Gamma}\eta(x-\omega(t)) dQ(\omega)=(\rho\ast \eta)(x,t),\\
(au)(x,t)&=\int_{\Gamma}v (\omega(t),t)\eta(x-\omega(t))dQ(\omega)=\left((\rho v)\ast\eta\right)(x,t),\\
(a|u|^2+\sigma)(x,t)&=\int_{\Gamma}\left|v(\omega(t),t) \right|^2\eta(x-\omega(t))dQ(\omega)=\left((\rho|v|^2)\ast\eta\right)(x,t).
\end{aligned} \]
Combining all these equations, we can formally derive the following HJ-CE coupled PDE system for equilibrium of ${{}{MFG(\Omega,\Psi,\delta,\eta,\lambda,m_0)}}$:
\[\begin{cases}
\vspace{0.1cm}
\displaystyle-\partial_t\varphi+\frac{|-\nabla\varphi+\lambda a u|^2}{2(\delta+\lambda a)}=\frac{\lambda}{2}\left(a|u|^2+\sigma\right),\\
\displaystyle\partial_t\rho+\nabla\cdot(\rho v)=0,\quad 
v=\frac{-\nabla\varphi+\lambda au }{\delta+\lambda a},\\
a=\rho\ast \eta,\; au=(\rho v)\ast\eta,\; a|u|^2+\sigma=(\rho |v|^2)\ast\eta,\\
\vspace{0.2cm}
\displaystyle\varphi(x,T)=\Psi(x),\quad \rho_0=m_0.
\end{cases} \]

This system perfectly fits the framework described, for instance, in Section 1.1 of \cite{Ziad-thesis}, of course in the case with no diffusion ($\nu=0$). Indeed, we have an HJ equation on $\varphi$ which involves the three quantities $a,u$ and $\sigma$ depending on the joint {}{distribution} of positions and velocities of the players, and a continuity equation where the velocity field depends on $\nabla\varphi$ and on the same quantities. Then, we see that  $a,u$ and $\sigma$ are defined via an implicit equation involving them as well as $\rho$ and $\nabla\varphi$. 

\subsection{Some regularity results}

We start from writing the Euler-Lagrange equation for the minimization of $F(\gamma,Q)$. In all this sub-section we will make the following assumptions:
\begin{description}
\item[(H$\eta$)]
$\eta$ is strictly positive and Lipschitz continuous, and there is a constant $C$ such that the inequality $|\nabla \eta(y)|\leq C\eta(y)$ holds for every $y\in \R^d$;
\item[(H$\Omega$)] $\Omega$ has no boundary (i.e. it is either the torus or the whole space $\mathbb{R}^d$) ;
\item[(H$\Psi$)] $\Psi$ is Lipschitz continuous.
\end{description}
Using the expression \eqref{Fausigma} we have the following equation
\begin{equation}\label{EL}
\begin{aligned}
&\left(\delta \gamma'+\lambda a(t,\gamma) (\gamma'-u(t,\gamma))\right)'\\
&\hspace{1cm}={}{\frac{1}{2}}\Big[\nabla_x a(t,\gamma)|\gamma'-u(t,\gamma)|^2+2a(t,\gamma)(u(t,\gamma)-\gamma')\nabla_x u(t,\gamma)+\nabla_x\sigma(t,\gamma)\Big],
\end{aligned}
\end{equation}
coupled with the transversality condition 
$$\delta \gamma'(T)+\lambda a(T,\gamma(T)) (\gamma'(T)-u(T,\gamma(T)))=-\nabla\Psi(\gamma(T)).$$
Note that we can write these equations exploiting the fact that $\Omega$ has no boundary, otherwise some Lagrange multipliers would appear, making the estimates more complicated.

This has to be interpreted in a proper sense (in the spirit of the DuBois-Reymond Lemma): there exists an absolutely continuous function $z_\gamma$, such that
\begin{itemize}
\item $z_\gamma'$ equals the right hand side of \eqref{EL} a.e.: 
$$z_\gamma'={}{\frac{1}{2}}\Big[\nabla_x a(t,\gamma)|\gamma'-u(t,\gamma)|^2+2a(t,\gamma)(u(t,\gamma)-\gamma')\nabla_x u(t,\gamma)+\nabla_x\sigma(t,\gamma)\Big];$$
\item the final condition $z_\gamma(T)=-\nabla\Psi(\gamma(T))$ is satisfied;
\item $z_\gamma$ coincides a.e. with $\delta \gamma'(t)+\lambda a(t,\gamma(t)) (\gamma'(t)-u(t,\gamma(t)))$, which is indeed a function defined only for a.e. $t$.
\end{itemize}

Note that this requires that the right hand side of \eqref{EL} should be an integrable function, and its integrability is proven in Lemma \ref{unifzgamma} below.

For the sequel, we will fix a negligible set $N\subset [0,T]$ such that, for $t\notin N$ we have the following three properties
\begin{itemize}
	\item $Q$-a.e. curve is differentiable at $t$, which makes $u$ well-defined at such a time;
	\item $M_1(t)<+\infty$;
	\item at time $t$, the equality $z_\gamma(t)=\delta \gamma'(t)+\lambda a(t,\gamma(t)) (\gamma'(t)-u(t,\gamma(t)))$ is satisfied for $Q$-a.e. curve $\gamma$.
\end{itemize}

For $t\notin N$, we will also denote by $G(t)\subset H^1([0,T])\subset\Gamma$ the set of ``good'' curves $\gamma$ at time $t$:
$$G(t)=\{\gamma\in H^1([0,T])\,:\, \mbox{At time $t$, $\gamma$ is differentiable  and }z_\gamma=\delta \gamma'+\lambda a (\gamma'-u)\},$$
where we omitted the dependence of $a$ and $u$ on $(t,\gamma(t))$.
By definition of $N$, we have $Q(G(t))=1$ for all $t\notin N$.

\begin{lemma}\label{unifzgamma} 
Suppose, besides the standing assumptions on the model, that (H$\eta$), (H$\Omega$) and (H$\Psi$) hold. Then, if $Q$ is an equilibrium, for every curve $\gamma$ which is optimal for $F(\cdot,Q)$, the vector $z_\gamma$ is uniformly bounded by a common constant, only depending on $Q$ and on the parameters of the problem.
\end{lemma}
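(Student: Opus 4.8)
The plan is to integrate the Euler--Lagrange relation \eqref{EL} backward from $t=T$, where the transversality condition pins down $z_\gamma(T)$, and to control the total variation $\int_0^T|z_\gamma'|\,dt$ by a constant depending only on $Q$ and the parameters. Since $z_\gamma(t)=z_\gamma(T)-\int_t^T z_\gamma'(s)\,ds$ and $|z_\gamma(T)|=|\nabla\Psi(\gamma(T))|\le\Lip(\Psi)$ by (H$\Psi$), it suffices to bound $\int_0^T|z_\gamma'|\,dt$ uniformly; this will also furnish the integrability of the right-hand side of \eqref{EL} that is needed to give $z_\gamma$ a meaning in the first place.

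The crucial preliminary is a uniform energy estimate along optimal curves, and I regard it as the main obstacle. Comparing an optimal $\gamma$ with the still curve at $x_0:=\gamma(0)$ yields
$$F(\gamma,Q)\le F(\still(x_0),Q)\le \Psi(x_0)+\tfrac{\lambda}{2}(\sup\eta)\int_0^T M_2(t)\,dt,$$
the last integral being finite. Writing $F(\gamma,Q)=\delta K(\gamma)+\Psi(\gamma(T))+\tfrac\lambda2\int_0^T\big(a(t,\gamma)|\gamma'-u(t,\gamma)|^2+\sigma(t,\gamma)\big)\,dt$, discarding the nonnegative $\sigma$-contribution, and absorbing the a priori unbounded boundary term through $\Psi(\gamma(0))-\Psi(\gamma(T))\le\Lip(\Psi)\int_0^T|\gamma'|\,dt$ into the kinetic energy by Young's inequality (exactly in the spirit of Lemma \ref{Kkin}), I expect a single constant $C_0$, \emph{independent of the starting point $x_0$}, such that
$$\frac\delta2 K(\gamma)+\frac\lambda2\int_0^T a(t,\gamma(t))\,|\gamma'(t)-u(t,\gamma(t))|^2\,dt\le C_0.$$
The cancellation of $\Psi(x_0)$ against the kinetic term is precisely what makes the estimate uniform over all optimal curves, regardless of where they start.

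Granted this, I would estimate the three pieces of $z_\gamma'$ separately using the gradient inequalities established for $a,u,\sigma$. For the first piece I use $|\nabla_x a|\le Ca$, so that $\tfrac12|\nabla_x a|\,|\gamma'-u|^2\le\tfrac C2\,a|\gamma'-u|^2$, whose time integral is controlled by $C_0$. For the last piece, $|\nabla_x\sigma|\le CM_2(t)$ integrates to $C\int_0^T M_2\,dt<\infty$. For the middle piece $a(u-\gamma')\cdot\nabla_x u$, I factor it as $\big(\sqrt a\,|\gamma'-u|\big)\big(\sqrt a\,|\nabla_x u|\big)$, use $a|\nabla_x u|^2\le CM_2(t)$ to bound the second factor by $\sqrt{CM_2(t)}$, and apply Cauchy--Schwarz in time to get
$$\int_0^T a\,|\gamma'-u|\,|\nabla_x u|\,dt\le\Big(\int_0^T a|\gamma'-u|^2\,dt\Big)^{1/2}\Big(C\int_0^T M_2\,dt\Big)^{1/2},$$
which is finite by the energy estimate above and the integrability of $M_2$. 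Summing the three contributions bounds $\int_0^T|z_\gamma'|\,dt$ by a constant depending only on $Q$ and the parameters, and combining with $|z_\gamma(T)|\le\Lip(\Psi)$ gives the claimed uniform bound on $\|z_\gamma\|_\infty$.
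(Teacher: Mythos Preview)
Your proof is correct and follows essentially the same route as the paper: compare with the still curve to obtain the uniform energy bound \eqref{comparison energy}, then estimate the three terms of $z_\gamma'$ via the inequalities $|\nabla_x a|\le Ca$, $a|\nabla_x u|^2\le CM_2$, and $|\nabla_x\sigma|\le CM_2$, and finally combine with the transversality bound $|z_\gamma(T)|\le\Lip(\Psi)$. The only cosmetic difference is that for the middle term you apply Cauchy--Schwarz in $L^2([0,T])$, whereas the paper uses the pointwise Young inequality $a|u-\gamma'|\,|\nabla_x u|\le\tfrac12 a|u-\gamma'|^2+\tfrac12 a|\nabla_x u|^2$; both lead to the same bound.
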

\begin{proof}
	The first point that we need to prove is a uniform bound on the energy $F(\gamma,Q)$ for optimal curves $\gamma$. To do this, we compare a curve $\gamma$ to the constant curve $\tilde\gamma=\still(x_0)$. We obtain then, ignoring a term with $\sigma(t,\gamma(t))\geq 0$,
	$$\begin{aligned}
	&\frac\delta 2\int_0^T|\gamma'(t)|^2dt+\frac\lambda 2\int_0^Ta(t,\gamma(t))|\gamma'(t)-u(t,\gamma(t))|^2dt\\
	&\hspace{2cm}\leq \frac\lambda 2\int_0^T\left(a(t,x_0)|u(t,x_0)|^2+\sigma(t,x_0)\right)dt+\left(\Psi(x_0)-\Psi(\gamma(T))\right).
	\end{aligned}$$
	We then use 
	$$\Psi(x_0)-\Psi(\gamma(T))\leq \Lip(\Psi)|\gamma(T)-\gamma(0)|\leq \Lip(\Psi)\int_0^T|\gamma'(t)|dt\leq \frac\delta 4\int_0^T|\gamma'(t)|^2dt+C(\delta,\Psi),$$
	which allows to write
	$$\frac\delta 4\int_0^T\!\!|\gamma'(t)|^2dt+\frac\lambda 2\int_0^T\!\!a(t,\gamma(t))|\gamma'(t)-u(t,\gamma(t))|^2dt\leq \frac\lambda 2\int_0^T\!\!\left(a(t,x_0)|u(t,x_0)|^2+\sigma(t,x_0)\right)dt+C.$$
	Finally, we use the bound $a|u|^2+\sigma\leq CM_2$ and the integrability of $M_2$ to obtain 
	\begin{equation}\label{comparison energy}
	\frac\delta 4\int_0^T|\gamma'(t)|^2dt+\frac\lambda 2\int_0^Ta(t,\gamma(t))|\gamma'(t)-u(t,\gamma(t))|^2dt\leq C.
	\end{equation}
	
	This implies that all curves which are optimal for $F(\cdot,Q)$ satisfy a uniform bound on both the kinetic energy and the term $\int_0^Ta(t,\gamma(t))|\gamma'(t)-u(t,\gamma(t))|^2dt$. 	
	We now proceed to estimating the integral in time of the right-hand side of \eqref{EL}, which would give boundedness of $z_\gamma$. The first term, using $|\nabla_x a|\leq Ca$ is easily seen to be integrable, and its integral is bounded by a universal constant, thanks to \eqref{comparison energy}. For the {}{second} term, we estimate 
	$$a|u-\gamma'||\nabla_x u|\leq \frac 12 a|u-\gamma'|^2+ \frac 12 a|\nabla_xu|^2,$$
	and both terms are integrable here because of \eqref{comparison energy} and of the inequality $a|\nabla_xu|^2\leq CM_2$. For the third term, we just need to use $|\nabla_x\sigma|\leq M_2$.
	
	This proves that $z_\gamma$ is bounded by a uniform constant.
\end{proof}

For the sake of Section \ref{convexity small T}, we also need to observe the following.
\begin{remark}
We obtained a bound for $z_\gamma$ when $\gamma$ is an optimal curve. We insisted that this bound is a universal constant, in the sense that it does not depend on the curve $\gamma$. We will denote this universal bound by $||z||_\infty$. This value could depend on $Q$ and on all the parameters of the problem ($\delta, \lambda, \Psi, \eta$, as well as $T$). Yet, it is easy to see, tracking all the possible dependencies on $T$ of the previous computations, that if the other parameters are fixed (in particular, $\delta,\lambda, \Psi,\eta$), then $||z||_\infty$ stays bounded as soon as $T$ is bounded (in particular, it does not degenerate if $T\to 0$).
\end{remark}

\begin{lemma} Suppose, besides the standing assumptions on the model, that (H$\eta$), (H$\Omega$) and (H$\Psi$) hold. Then, if $Q$ is an equilibrium, every curve $\gamma\in\spt(Q)$ is Lipschitz continuous with a Lipschitz constant at most $||z||_\infty/\delta$. \end{lemma}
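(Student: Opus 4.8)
The plan is to read the sharp Lipschitz bound directly off the momentum identity satisfied by $z_\gamma$. For any optimal curve $\gamma$ and a.e.\ $t$ (where $\gamma$ is differentiable), the defining property of $z_\gamma$ reads
\[
z_\gamma(t)=\delta\gamma'(t)+\lambda a\big(\gamma'(t)-u\big),\qquad\text{i.e.}\qquad (\delta+\lambda a)\,\gamma'(t)=z_\gamma(t)+\lambda a\,u,
\]
where I suppress the arguments $(t,\gamma(t))$ of $a$ and $u$. Since $|z_\gamma|\le\|z\|_\infty$ by the previous lemma and $a|u|\le CM_1(t)<\infty$ for $t\notin N$, this already yields a finite (but non-sharp) velocity bound $|\gamma'(t)|\le(\|z\|_\infty+\lambda CM_1(t))/\delta$, uniform over the curves in $G(t)$. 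In particular, for every unit vector $e$ the quantity $S_e(t):=\sup\{\gamma'(t)\cdot e:\gamma\in G(t)\}$ is finite. The sharp constant $\|z\|_\infty/\delta$ will not come from estimating $|u|$ (which cannot be bounded pointwise), but from the self-consistency built into the fact that $u$ is itself an average of the velocities being estimated.

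The key step is precisely this averaging. Because $u(t,\gamma(t))$ is the weighted average of $\omega'(t)$ against $\eta(\gamma(t)-\omega(t))\,dQ(\omega)$, and $Q(G(t))=1$, we have $u(t,\gamma(t))\cdot e\le S_e(t)$. Projecting the momentum identity onto $e$ and using $z_\gamma\cdot e\le\|z\|_\infty$ then gives, for every $\gamma\in G(t)$,
\[
\gamma'(t)\cdot e\le\frac{\|z\|_\infty+\lambda a\,S_e(t)}{\delta+\lambda a}=:h(a).
\]
Taking the supremum over $\gamma\in G(t)$ and recalling $0<a\le C$, I claim $S_e(t)\le\|z\|_\infty/\delta$. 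Indeed $h$ is monotone in $a$ with $h'$ having the sign of $\delta S_e(t)-\|z\|_\infty$, while $h(0)=\|z\|_\infty/\delta$ and $h(+\infty)=S_e(t)$. If one had $\delta S_e(t)>\|z\|_\infty$, then $h$ would be increasing, so $h(a)\le h(C)<S_e(t)$ for all admissible $a$; taking the supremum in the displayed inequality would give $S_e(t)\le h(C)<S_e(t)$, a contradiction. Hence $\delta S_e(t)\le\|z\|_\infty$.

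It remains to collect the conclusion. Feeding $S_e(t)\le\|z\|_\infty/\delta$ back into the per-curve inequality gives, for every optimal curve $\gamma$ and every unit vector $e$,
\[
\gamma'(t)\cdot e\le\frac{\|z\|_\infty+\lambda a\,\|z\|_\infty/\delta}{\delta+\lambda a}=\frac{\|z\|_\infty}{\delta}.
\]
Choosing $e=\gamma'(t)/|\gamma'(t)|$ yields $|\gamma'(t)|\le\|z\|_\infty/\delta$ for a.e.\ $t$; since any $\gamma\in\spt(Q)$ is optimal (hence lies in $H^1$ with $F(\gamma,Q)<\infty$) and is differentiable a.e., this pointwise a.e.\ velocity bound upgrades to the Lipschitz estimate with constant $\|z\|_\infty/\delta$. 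The genuinely delicate point, and the main obstacle, is the self-consistency argument of the second paragraph: establishing beforehand that $S_e(t)$ is finite (via the crude $M_1$-bound) is what makes the contradiction rigorous, and the cancellation of the $\lambda a$-terms at the extremal velocity is exactly what makes the final constant independent of $a$, $u$ and the kernel.
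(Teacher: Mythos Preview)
Your proof is correct and follows essentially the same self-consistency bootstrap as the paper: obtain a crude finiteness bound from $M_1$, then use that $u$ is an average of the very velocities being bounded to close the loop and extract the sharp constant $\|z\|_\infty/\delta$ via a contradiction on the supremum. The only cosmetic difference is that the paper works directly with $L:=\sup_{\gamma\in\spt(Q)\cap G(t)}|\gamma'(t)|$ and the bound $|u|\le L$, whereas you project onto a unit vector $e$ and work with $S_e(t)$; your analysis of the function $h(a)$ is a slightly more detailed version of the paper's $\varepsilon$-argument, but the mechanism is identical.
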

\begin{proof}
	Consider an instant of time $t\notin N$. From the finiteness of $M_1(t)$ we deduce a bound on $(au)(t,x)$. This bound is uniform in $x$ but a priori not in $t$. From the boundedness of $z_\gamma$ (which is indeed also uniform in $t$) and of $au$ we deduce boundedness of $(\delta+{}{\lambda}a(t,\gamma(t)))\gamma'(t)$, i.e. of $\gamma'$, at least for those curves $\gamma\in G(t)$. We now set $L:=\sup |\gamma'(t)|<\infty$, the sup being taken among curves $\gamma\in\spt(Q)\cap G(t)$. We will prove $L\leq ||z||_\infty/\delta$.
	
	It is clear that we have $|u(t,x)|\leq L$ for every $x$, since the bound $|\gamma'(t)|\leq L$ is true for $Q-$a.e. curve $\gamma$. Then, from
	$$(\delta+{}{\lambda}a(t,\gamma(t)))|\gamma'(t)|\leq ||z||_\infty+{}{\lambda}a(t,\gamma(t))|u(t,\gamma(t))|\leq  ||z||_\infty+{}{\lambda}a(t,\gamma(t))L,$$
	we deduce 
	$$|\gamma'(t)|\leq \frac{ ||z||_\infty}{\delta+{}{\lambda}a(t,\gamma(t))}+\frac{{}{\lambda}a(t,\gamma(t))}{\delta+{}{\lambda}a(t,\gamma(t))}L.$$
	Suppose now $L>||z||_\infty/\delta$, in which case we can find $\varepsilon>0$ such that $||z||_\infty\leq \delta L (1-\varepsilon)$. Using the bound $a\leq C$, we then have
	$$|\gamma'(t)|\leq L \frac{ (1-\varepsilon)\delta +{}{\lambda}a(t,\gamma(t))}{\delta+{}{\lambda}a(t,\gamma(t))}=L\left(1-\frac{\delta\varepsilon}{\delta+{}{\lambda}a(t,\gamma(t))}\right)\leq L\left(1-\frac{\delta\varepsilon}{\delta+{}{\lambda}C}\right) .$$
	
	Taking the supremum over $\gamma$ we obtain a contradiction $L<L$.
	
	This shows $L\leq ||z||_\infty/\delta$ and hence every curve in $\spt(Q)$ satisfies a uniform bound on $\gamma'(t)$ for every $t$ such that $\gamma\in G(t)$ and $t\notin N$. This happens for almost every instant of time and it is enough, together with its absolute continuity, to state that $\gamma$ is Lipschitz continuous and its Lipschitz constant is at most $||z||_\infty/\delta$.
\end{proof}

\begin{lemma}\label{C11}
	Suppose, besides the standing assumptions on the model, that (H$\eta$), (H$\Omega$) and (H$\Psi$) hold. Then, if $Q$ is an equilibrium, every curve $\gamma\in\spt(Q)$ is $C^{1,1}$  with a uniform Lipschitz constant for $\gamma'$ only depending on $Q$ and on the parameters of the problem.\end{lemma}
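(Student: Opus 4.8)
The plan is to combine the Euler--Lagrange relation \eqref{EL} with the uniform Lipschitz bound on trajectories already obtained, and then to close a self-improving estimate on the velocity increments. First I would record what the previous lemma yields: since every $\gamma\in\spt(Q)$ is $L$-Lipschitz with $L\le\|z\|_\infty/\delta$, the moments now satisfy $M_1(t)\le L$ and $M_2(t)\le L^2$ for a.e.\ $t$, and the weighted average obeys $|u(t,x)|\le L$ pointwise (it is a convex average of the $\omega'(t)$, all bounded by $L$). Feeding these uniform bounds into the gradient estimates, every term on the right-hand side of \eqref{EL} becomes uniformly bounded: $|\nabla_x a|\,|\gamma'-u|^2\le 4CL^2$ via $|\nabla_x a|\le Ca\le C$, the second term via $a|u-\gamma'|\,|\nabla_x u|\le 2L\,(a|\nabla_x u|)\le 2L\cdot CM_1\le 2CL^2$, and $|\nabla_x\sigma|\le CM_2\le CL^2$. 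Hence $z_\gamma'$ is bounded by a constant independent of $\gamma$, so $z_\gamma$ is Lipschitz in $t$ with a uniform constant.

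Second, I would check that $t\mapsto a(t,\gamma(t))$ is uniformly Lipschitz: from $|a(t_1,x)-a(t_2,x)|\le\Lip(\eta)\int|\omega(t_1)-\omega(t_2)|\,dQ\le\Lip(\eta)L|t_1-t_2|$ together with the $\Lip(\eta)$-Lipschitz dependence of $a$ in $x$, composition with the $L$-Lipschitz curve $\gamma$ gives a Lipschitz constant $\le 2\Lip(\eta)L$. Solving the defining relation of $G(t)$ for $\gamma'$, every optimal curve satisfies, for a.e. $t$,
\[\gamma'(t)=\frac{z_\gamma(t)+\lambda(au)(t,\gamma(t))}{\delta+\lambda a(t,\gamma(t))}.\]
The part $z_\gamma$ and the denominator are now uniformly Lipschitz, the denominator is bounded below by $\delta$, and the only term of unknown time-regularity is $(au)(t,\gamma(t))$, which is an average of the still-uncontrolled velocities $\omega'(t)$. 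This circular dependence is the main obstacle.

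To break it, I would set $\psi(t,h):=\sup_{\gamma\in\spt(Q)}|\gamma'(t+h)-\gamma'(t)|$, which is a priori finite ($\le 2L$). Comparing $(au)(t+h,\gamma(t+h))$ with $(au)(t,\gamma(t))$ and separating the contribution of the velocity increment from that of the displacement of the arguments of $\eta$, the key computation is
\[\Big|\int[\omega'(t+h)-\omega'(t)]\,\eta(\gamma(t+h)-\omega(t+h))\,dQ(\omega)\Big|\le \psi(t,h)\int\eta(\gamma(t+h)-\omega(t+h))\,dQ(\omega)=a(t+h,\gamma(t+h))\,\psi(t,h),\]
while the remaining part is $O(h)$ by the Lipschitz continuity of $\eta$ and of the curves. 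Dividing by $\delta+\lambda a(t+h,\gamma(t+h))$ and using the uniform Lipschitz bounds on $z_\gamma$ and on $a(\cdot,\gamma(\cdot))$, this gives $|\gamma'(t+h)-\gamma'(t)|\le\theta(t+h)\,\psi(t,h)+Ch$, where the crucial point is that the \emph{same} value $a(t+h,\gamma(t+h))$ appears in numerator and denominator, so
\[\theta(t+h)=\frac{\lambda a(t+h,\gamma(t+h))}{\delta+\lambda a(t+h,\gamma(t+h))}\le\frac{\lambda\sup\eta}{\delta+\lambda\sup\eta}=:\theta<1\]
uniformly, with no smallness assumption on $\lambda$, $\delta$ or $\eta$. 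Taking the supremum over $\gamma$ gives $\psi(t,h)\le\theta\,\psi(t,h)+Ch$, hence $\psi(t,h)\le\frac{C}{1-\theta}h$ for every small $h$; this is exactly the statement that $\gamma'$ has a Lipschitz representative with constant $\frac{C}{1-\theta}$ independent of the curve, for $Q$-a.e.\ $\gamma$. An Arzel\`a--Ascoli/approximation argument then upgrades this to every $\gamma\in\spt(Q)$, proving the $C^{1,1}$ regularity with uniform constant. I expect the bookkeeping of the $O(h)$ terms and the passage from the $Q$-a.e.\ statement to all of $\spt(Q)$ to be the only routine-but-delicate points; the conceptual heart is the matching of $a$ in numerator and denominator that forces the contraction factor strictly below $1$.
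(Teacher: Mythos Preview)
Your proposal is correct and follows essentially the same self-improving argument as the paper: both feed the uniform Lipschitz bound on the trajectories back into the Euler--Lagrange relation to make $z_\gamma$ Lipschitz in time, then set up an implicit inequality for the supremal velocity increment $\sup_\gamma|\gamma'(t+h)-\gamma'(t)|$ in which the only uncontrolled contribution, coming from $(au)$, carries the coefficient $\lambda a/(\delta+\lambda a)<1$ and can therefore be absorbed. The paper phrases the closing step via an $\varepsilon$-near-maximizer (obtaining $(\delta+\lambda a)|\gamma'(s)-\gamma'(t)|\le C|t-s|+\lambda a\,L$ and then letting $\varepsilon\to0$) rather than through your explicit contraction ratio $\theta=\lambda\sup\eta/(\delta+\lambda\sup\eta)$, but the mechanism---matching the same $a$ in numerator and denominator---is identical, and the care with the negligible set $N$ and the sets $G(t)$ is exactly as you anticipate.
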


\begin{proof}
We take two instants of time $t,s\notin N$ and we set $L:=\sup |\gamma'(t)-\gamma'(s)|<\infty$, the sup being taken among curves $\gamma\in\spt(Q)\cap G(t)\cap G(s)$. We want to prove a bound of the form $L\leq C|t-s|$. First we note that, from the previous Lipschitz uniform bound, we also deduce uniform bounds for the vector field $u$ and for the derivative of all functions $z_\gamma$. Hence we can write, for $\gamma\in\spt(Q)\cap G(t)\cap G(s)$,
\begin{multline*}
|(\delta+{}{\lambda}a(s,\gamma(s)))\gamma'(s)-{}{\lambda}(au)(s,\gamma(s)))-(\delta+{}{\lambda}a(t,\gamma(t)))\gamma'(t){}{+\lambda}(au)(t,\gamma(t)))|\\
= |z_\gamma(s)-z_\gamma(t)|\leq C|t-s|.
\end{multline*}
We now use the uniform Lipschitz bound of $a(t,x)$ and $(au)(t,x)$ w.r.t. $x$, together with the Lipschitz bound on $\gamma$ to deduce from the above inequality the following one 
$$|(\delta+{}{\lambda}a(s,\gamma(t)))\gamma'(s)-{}{\lambda}(au)(s,\gamma(t)))-(\delta+a(t,\gamma(t)))\gamma'(t)+{}{\lambda}(au)(t,\gamma(t)))|\leq C|t-s|.$$	
We then note that $a(t,x)$ is also Lipschitz in time, now that we know that all curves in $\spt(Q)$ are uniformly Lipschitz, since $|a(t,x)-a(s,x)|\leq \Lip(\eta)\int |\omega(s)-\omega(t)|dQ(\omega)$. This allows to obtain 
$$|(\delta+{}{\lambda}a(t,\gamma(t)))(\gamma'(s)-\gamma'(t))-{}{\lambda}(au)(s,\gamma(t)))+{}{\lambda}(au)(t,\gamma(t)))|\leq C|t-s|.$$	
We look now at the behavior in time of $(au)(t,x)$. We have
$$\begin{aligned}
|(au)(s,x)-(au)(t,x)|&\leq \Lip(\eta)\int |\omega(t)-\omega(s)|dQ(\omega)+\int \eta(x-\omega(t))|\omega'(s)-\omega'(t)|dQ(\omega)\\
&\leq C|t-s|+a(t,x)L,
\end{aligned}$$
where we used the fact that $Q$-a.e. curve $\omega$ satisfies $|\omega'(s)-\omega'(t)|\leq L$.
We then obtain
$$(\delta+{}{\lambda}a(t,\gamma(t)))|\gamma'(s)-\gamma'(t)|\leq C|t-s|+{}{\lambda}a(t,\gamma(t))L.$$
We now take a number $\ve>0$ and choose a curve $\gamma\in\spt(Q)\cap G(t)\cap G(s)$ such that $|\gamma'(s)-\gamma'(t)|\geq (1-\ve)L$, thus obtaining
$$(\delta-\ve(\delta+C))L\leq(\delta(1-\ve)-\ve {}{\lambda}a(t,\gamma(t)))L\leq C|t-s|,$$
which gives the desired bound as soon as one takes $\ve\to 0$.

With this bound in mind, we know that every curve $\gamma\in \spt(Q)$ is Lipschitz continuous, and its derivative, which is a priori only defined a.e., is Lipschitz continuous on a set of full measure. This is enough to conclude $\gamma\in C^{1,1}$.
\end{proof}

The above statement concerns the curves $\gamma\in \spt(Q)$. Hence, it does not necessarily apply to all optimal curves for $F(\cdot,Q)$. Indeed, it would be possible that some optimal curves do not belong to the support of $Q$, and the use of the set $G(t)$ and $N$ in the proof is suited for a proof targeting measures in $\spt(Q)$. Anyway, we can easily establish the following result.

\begin{corollary}\label{all optimal C11}
Suppose, besides the standing assumptions on the model, that (H$\eta$), (H$\Omega$) and (H$\Psi$) hold. Then, if $Q$ is an equilibrium, every curve $\gamma$ which is optimal for $F(\cdot,Q)$ for fixed initial point is $C^{1,1}$ and satisfies $|\gamma'|\leq ||z||_\infty/\delta$.
\end{corollary}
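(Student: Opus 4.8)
The plan is to re-run the arguments of the two preceding lemmas, but to exploit the fact that, once $Q$ is fixed, the coefficient fields $a$, $u$, $au$ and $\sigma$ depend only on $Q$ and not on the particular curve we optimize. Since $Q$ is an equilibrium, its support consists of curves which, by Lemma \ref{C11}, are uniformly $C^{1,1}$; in particular $Q$-a.e.\ curve $\omega$ satisfies $|\omega'(t)|\le\|z\|_\infty/\delta$ for a.e.\ $t$, so that $M_2(t)\le(\|z\|_\infty/\delta)^2$ is bounded in time and the fields $a$ and $au$ are bounded and Lipschitz in $(t,x)$ (the Lipschitz control in time being available precisely because $Q$-a.e.\ $\omega$ has a Lipschitz derivative, as used in the proof of Lemma \ref{C11}). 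Crucially, since $u(t,x)$ is a weighted average of the velocities $\omega'(t)$ over $Q$-a.e.\ curves, one obtains the pointwise bound $|u(t,x)|\le\|z\|_\infty/\delta$ for \emph{every} $(t,x)$. None of these facts involves the curve $\gamma$ we wish to analyze.

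Next I would invoke Lemma \ref{unifzgamma}, whose statement and proof already apply to \emph{every} curve $\gamma$ optimal for $F(\cdot,Q)$ (the comparison there is with the constant curve $\still(\gamma(0))$ and uses only optimality), to get $\|z_\gamma\|_\infty\le\|z\|_\infty$ for an arbitrary optimal $\gamma$, not merely for $\gamma\in\spt(Q)$. Being a minimizer, such a $\gamma$ satisfies the Euler--Lagrange relation \eqref{EL}, i.e.\ $z_\gamma=\delta\gamma'+\lambda a(\gamma'-u)$ for a.e.\ $t\notin N$. The Lipschitz bound is then immediate and, unlike in the previous lemma, needs no bootstrap/contradiction device: one writes
\[(\delta+\lambda a(t,\gamma(t)))|\gamma'(t)|\le |z_\gamma(t)|+\lambda a(t,\gamma(t))|u(t,\gamma(t))|\le \|z\|_\infty+\lambda a(t,\gamma(t))\frac{\|z\|_\infty}{\delta},\]
which rearranges to $|\gamma'(t)|\le\|z\|_\infty/\delta$. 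The self-referential supremum over $\spt(Q)$ used before is here replaced by the already-known global bound $|u|\le\|z\|_\infty/\delta$.

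For the $C^{1,1}$ statement I would repeat the computation of Lemma \ref{C11} verbatim for this single $\gamma$. The right-hand side of \eqref{EL} is now bounded (using $|\gamma'|,|u|\le\|z\|_\infty/\delta$, the boundedness of $M_2$, and the coefficient estimates $|\nabla_x a|\le Ca$, $a|\nabla_x u|^2\le CM_2$, $|\nabla_x\sigma|\le CM_2$), so $z_\gamma$ is Lipschitz in $t$. Combining $|z_\gamma(s)-z_\gamma(t)|\le C|s-t|$ with the Lipschitz-in-$(t,x)$ control of $a$ and $au$ and the Lipschitz continuity of $\gamma$ just established, the same algebraic manipulation yields
\[(\delta+\lambda a(t,\gamma(t)))|\gamma'(s)-\gamma'(t)|\le C|s-t|,\]
whence $|\gamma'(s)-\gamma'(t)|\le(C/\delta)|s-t|$ and $\gamma\in C^{1,1}$.

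The only genuine subtlety, and the point I would check most carefully, is the logical separation between the two roles played by $Q$: the fields $a,u,\sigma$ and all their regularity (boundedness and the Lipschitz bounds in space and time) are produced \emph{once and for all} from the $C^{1,1}$ regularity of the curves in $\spt(Q)$, and are thus available as fixed, externally given data, while the curve $\gamma$ under analysis enters only through \eqref{EL} and the bound of Lemma \ref{unifzgamma}. Since $\gamma$ need not belong to the collection over which $u$ is averaged, the supremum/contradiction mechanism of the previous two lemmas becomes superfluous, and the estimates hold for every optimal curve, not only for those in the support of $Q$.
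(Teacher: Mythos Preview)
Your proposal is correct and follows essentially the same route as the paper: first extract from the $C^{1,1}$ regularity of $\spt(Q)$ the global bound $|u|\le\|z\|_\infty/\delta$ and the Lipschitz-in-$(t,x)$ regularity of $a$ and $au$; then apply Lemma~\ref{unifzgamma} to an arbitrary optimal $\gamma$ and read off $|\gamma'|\le\|z\|_\infty/\delta$ directly from $z_\gamma=\delta\gamma'+\lambda a(\gamma'-u)$ without any bootstrap; finally use boundedness of $\gamma'$ to make $z_\gamma$ Lipschitz and conclude $\gamma'\in\Lip$. Your explicit remark that the supremum/contradiction mechanism is now superfluous, and your clean separation of the two roles of $Q$, match the paper's logic exactly.
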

\begin{proof}
Once we know that all curves in $ \spt(Q)$ are $C^{1,1}$, we obtain the Lipschitz continuity (and boundedness, of course) in time and space of the functions $a(t,x)$ and $(au)(t,x), \sigma(t,x)$ (for $au$ we saw in the proof of Lemma \ref{C11} that the Lipschitz constant in time is the same as that of the velocities $\omega'$ for $\omega\in\spt(Q)$). We also remark that $|u|$ is bounded by a very explicit constant, i.e.  $|u|\leq ||z||_\infty/\delta$ since all curves in $\spt(Q)$ are Lipschitz continuous with this Lipschitz constant. Take now an optimal curve $\gamma$: from the fact that $z_\gamma$ is bounded (Lemma \ref{unifzgamma}), we deduce boundedness of $(\delta+{}{\lambda}a(t,\gamma(t))\gamma'(t)-{}{\lambda}(au)(t,\gamma(t))$. More precisely,  also using the bound on $|u|$, we obtain 
$$(\delta + {}{\lambda}a(t,\gamma(t)))|\gamma'(t)|\leq ||z||_\infty + {}{\lambda}a(t,\gamma(t))|u(t,\gamma(t))|\leq ||z||_\infty \left(1+\frac{{}{\lambda}a(t,\gamma(t))}{\delta}\right),$$
which implies $|\gamma'(t)|\leq ||z||_\infty /\delta$.

We then use again the properties of $z_\gamma$, together with the boundedness of $\gamma'$, to obtain that $z_\gamma$ is Lipschitz in time. Yet, since we know that $\gamma$ itself is Lipschitz in time, and that the functions $a$ and $au$ are Lipschitz in time and space, this provides $\gamma'\in \Lip$ and proves the claim.
\end{proof}

\subsection{Monokineticity}

It is useful to note that the $C^{1,1}$ result of Corollary \ref{all optimal C11} implies monokineticity in the following sense: if we take two curves $\gamma_1,\gamma_2\in \spt(Q)$, a time $t\in (0,T]$, and we suppose $\gamma_1(t)=\gamma_2(t)$, then we also have $\gamma_1'(t)=\gamma_2'(t)$. Hence, for each time $t$ which is not the initial time $t=0$, the velocity of all particles at a same point is the same, thus defining a velocity field $v(t,x)$ such that the curves $\gamma\in\spt(Q)$ follow $\gamma'(t)=v(t,\gamma(t))$ (without stating anything about the regularity of this velocity field $v$). For $t=T$ this is a consequence of the final condition in the Euler-Lagrnage equation $z_\gamma(T)=-\nabla\Psi(\gamma(T))$ and of the fact that $z_\gamma$ allows to identify $\gamma'$, once we know $a$ and $u$, which only depend on time and position. For $t<T$ this comes from regularity: should we have  $\gamma_1(t)=\gamma_2(t)$ but $\gamma_1'(t)\neq\gamma_2'(t)$, then we could build a curve $\tilde\gamma$ which is also optimal for $F(\cdot,Q)$, and coincides with $\gamma_1$ before time $t$, and with $\gamma_2$ after time $t$. This curve would not be $C^1$, and would hence violate Corollary \ref{all optimal C11}.

The above monokineticity allows to re-write our optimization problem using an Eulerian formulation in terms $\rho$ and $v$. Indeed, the problem of minimizing $\mathcal J $ becomes the minimization of
$$\frac\delta 2\int_0^T\int_\Omega|v_t|^2d\rho_t(x)dt+\frac\lambda 2\int_0^T\int_\Omega\int_\Omega \eta(x-x')|v_t(x)-v_t(x')|^2d\rho_t(x)d\rho_t(x')dt$$
among all $(\rho,v)$ satisfying 
\[\partial_t\rho+\nabla\cdot(\rho v)=0,~\rho_0=m_0. \]

As anyway our smoothness result is only valid when no boundary is present, we can ignore the boundary and re-write this in terms of convolutions, also setting, as it is usual in the Benamou-Brenier formulation of optimal transport, $w=\rho v$ : we get

$$\min\left\{\int_0^T\int_\Omega\left(\frac\delta 2\frac{|w_t|^2}{\rho_t}+\lambda\frac{|w_t|^2}{\rho_t} (\eta*\rho_t)-\lambda w_t\cdot (\eta*w_t)\right)dxdt\;:\;\partial_t\rho+\nabla\cdot w=0,\rho_0=m_0\right\}.$$

The reader can see that, thanks to the presence of the regularizing convolutional terms and of the initial term with the total kinetic energy, it would be possible to prove existence of a minimizer for the above problem by standard direct methods.

\subsection{Uniqueness of optimal curves and equilibria in pure strategies}\label{convexity small T}

The goal of this section is to show that, for small $T$, the functional $F(\gamma,Q)$ is strictly convex in $\gamma$ when restricted to the set of curves with a given Lipschitz constant and given initial point. We will see that this implies uniqueness of the minimizers and the existence of equilibria in pure strategies. To obtain this result, we need to assume lower bounds on $D^2\Psi$ and $D^2\eta$ (i.e., we suppose that $\Psi$ and $\eta$ are semi-convex, i.e. they become convex if we add to them a suitably large quadratic function)..

\begin{lemma}\label{hh' estimate}
Suppose that $\Psi$ and $\eta$ are $C^1$, Lipschitz continuous, and semi-convex, and that $Q$ is concentrated on curves which are all $C_0$-Lipschitz continuous. Then,  for every $L>0$, every curve $\gamma$ with $|\gamma'|\leq L$, and every $h\in H^1([0,T])$, we have
\begin{equation}\label{hh' eqn}
\begin{aligned}
F(\gamma+h,Q)&\geq F(\gamma,Q)+A[\gamma](h)+\frac\delta 2\int_0^T|h'(t)|^2dt\\
&\hspace{1cm}-C(L+C)\int_0^T|h(t)||h'(t)|dt-C(L+C)^2\int_0^T|h(t)|^2dt-C|h(T)|^2,
\end{aligned}
\end{equation}
where $A[\gamma](h)$ is a linear form in $h$ given by
\begin{eqnarray*}
A[\gamma](h)&=&\delta \int_0^T \gamma'(t)\cdot h'(t)dt+\lambda \int_0^T\int (\gamma'(t)-\omega'(t))\cdot h'(t) \eta(\gamma(t)-\omega(t))dQ(\omega)dt\\
&&+{{}\frac{\lambda}{2}} \int_0^T\int |\gamma'(t)-\omega'(t)|^2\nabla\eta(\gamma(t)-\omega(t))\cdot h(t)dQ(\omega)dt+\nabla\Psi(\gamma(T))\cdot h(T),
\end{eqnarray*}
and the constant $C$ depends on $C_0$ and on $\eta$.
\end{lemma}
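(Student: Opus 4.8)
The plan is to expand $F(\gamma+h,Q)$ termwise according to the decomposition $F=K_{\delta,\Psi}+\lambda V_Q$ of \eqref{extendF}, isolating in each piece an exact zeroth-order part (which reassembles into $F(\gamma,Q)$), an exact first-order part (which reassembles into the linear form $A[\gamma](h)$), the single favorable quadratic term $\frac\delta2\int_0^T|h'|^2$, and error terms controlled by the Lipschitz and semi-convexity hypotheses. The kinetic part is the easiest: since $K$ is exactly quadratic, $\frac\delta2\int_0^T|\gamma'+h'|^2=\frac\delta2\int_0^T|\gamma'|^2+\delta\int_0^T\gamma'\cdot h'+\frac\delta2\int_0^T|h'|^2$, producing the first summand of $A[\gamma](h)$ and the good term with no error. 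For the final cost I would use the semi-convexity of $\Psi$: writing $\Psi+\frac{C}{2}|\cdot|^2$ as a convex function and applying its subgradient inequality at the points $\gamma(T)$ and $\gamma(T)+h(T)$ gives $\Psi(\gamma(T)+h(T))\ge\Psi(\gamma(T))+\nabla\Psi(\gamma(T))\cdot h(T)-C|h(T)|^2$, which supplies the last summand of $A[\gamma](h)$ together with the $-C|h(T)|^2$ error.

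The heart of the argument, and the main obstacle, is the interaction term. I fix $\omega$ and abbreviate $p(t):=\gamma'(t)-\omega'(t)$ and $q(t):=\gamma(t)-\omega(t)$, so that the integrand of $V(\gamma+h,\omega)$ is $\frac12|p+h'|^2\eta(q+h)$. I would first expand the squared velocity, $\frac12|p+h'|^2=\frac12|p|^2+p\cdot h'+\frac12|h'|^2$, and write
$$\tfrac12|p+h'|^2\eta(q+h)=\tfrac12|p|^2\eta(q+h)+(p\cdot h')\eta(q+h)+\tfrac12|h'|^2\eta(q+h).$$
The last term is nonnegative because $\eta>0$, so it can simply be discarded in a lower bound. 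In the middle term I replace $\eta(q+h)$ by $\eta(q)$ at the cost of $|p\cdot h'|\,|\eta(q+h)-\eta(q)|\le\Lip(\eta)(L+C_0)|h|\,|h'|$, using $|p|\le L+C_0$ (valid since $|\gamma'|\le L$ and $Q$-a.e. $\omega$ is $C_0$-Lipschitz); the remaining $(p\cdot h')\eta(q)$ is exactly the second summand of $A[\gamma](h)$ after integration against $dQ$ and multiplication by $\lambda$. For the first term I invoke the semi-convexity of $\eta$ in the form $\eta(q+h)\ge\eta(q)+\nabla\eta(q)\cdot h-C|h|^2$ and multiply by $\frac12|p|^2\ge0$, which preserves the inequality and yields the third summand of $A[\gamma](h)$ plus an error controlled by $\frac{C}{2}(L+C_0)^2|h|^2$.

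Finally I would integrate these pointwise (in $t$ and $\omega$) estimates over $[0,T]$ against $dQ(\omega)$, multiply by $\lambda$, and add the kinetic and final-cost contributions. Since $\int dQ=1$ and the error bounds are uniform in $\omega$, the accumulated errors are $\lambda\Lip(\eta)(L+C_0)\int_0^T|h||h'|$, $\frac{\lambda C}{2}(L+C_0)^2\int_0^T|h|^2$ and $C|h(T)|^2$; absorbing $C_0$, $\Lip(\eta)$ and the semi-convexity modulus of $\eta$ into a single constant $C$ depending only on $C_0$ and $\eta$ gives precisely \eqref{hh' eqn}. All manipulations are justified because $\gamma$, $h$ and $Q$-a.e.\ $\omega$ lie in $H^1$ (indeed $\gamma$ and $\omega$ are Lipschitz), so $\gamma+h-\omega\in H^1$, $\eta$ is bounded, and Fubini applies; the only genuinely delicate points are keeping the correct sign when applying the semi-convexity inequalities and exploiting the nonnegativity of $\eta$ and of $|p|^2$ so that the second-order remainders always land on the favorable side of the estimate.
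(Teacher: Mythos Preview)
Your proof is correct and follows essentially the same route as the paper: an exact quadratic expansion of the kinetic term, the semi-convexity inequality for $\Psi$ at $\gamma(T)$, and, for the interaction term, dropping the nonnegative $|h'|^2\eta$ contribution, using the Lipschitz bound on $\eta$ for the cross term $(p\cdot h')\eta(q+h)$, and the semi-convexity of $\eta$ (multiplied by the nonnegative factor $\tfrac12|p|^2$) for the remaining piece. The bookkeeping of the error constants and the use of $|p|\le L+C_0$ match the paper's argument as well.
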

\begin{proof}
We start from the following equalities or inequalities
\begin{align*}
\frac 12 \int_0^T|\gamma'+h'|^2dt&=\frac 12 \int_0^T|\gamma'|^2dt+\int_0^T\gamma'\cdot h'dt+\frac 12 \int_0^T|h'|^2dt,\\
\Psi(\gamma(T)+h(T))&\geq \Psi(\gamma(T))+\nabla\Psi(\gamma(T))\cdot h(T) -C|h(T)|^2,
\end{align*}
The part of $F$ which requires more attention is the following
\begin{align*}
&\int_0^T\int |(\gamma'+h')-\omega'|^2\eta(\gamma+h-\omega)dQ(\omega)dt\\
&\hspace{1cm}\geq \int_0^T\int |\gamma'-\omega'|^2\eta(\gamma+h-\omega)dQ(\omega)dt+2\int_0^T\int (\gamma'-\omega')\cdot h'\eta(\gamma+h-\omega)dQ(\omega)dt\\
&\hspace{1cm}\geq \int_0^T\int |\gamma'-\omega'|^2\eta(\gamma-\omega)dQ(\omega)dt+\int_0^T\int |\gamma'-\omega'|^2\nabla\eta(\gamma-\omega)\cdot h dQ(\omega)dt\\
&\hspace{1,5cm}- C\int |\gamma'-\omega'|^2dQ(\omega)|h|^2dt+2\int_0^T\int (\gamma'-\omega')\cdot h'\eta(\gamma-\omega)dQ(\omega)dt\\
&\hspace{1.5cm}-2\int_0^T\int |(\gamma'-\omega')|\cdot |h'|\Lip(\eta)|h|dQ(\omega)dt.
\end{align*}
Putting together these inequalities and using $|\gamma'|\leq L$ and $|\omega'|\leq C_0$ we obtain the desired result.
\end{proof}

\begin{proposition}
Suppose, besides the standing assumptions on the model, that (H$\eta$), (H$\Omega$) and (H$\Psi$) hold and that $\Psi$ and $\eta$ are $C^1$, Lipschitz continuous, and semi-convex. Then, if $T$ is smaller than a constant depending only on $\delta,\lambda,||z||_\infty, \eta$ and on the lower bounds of $D^2\Psi$ and $D^2\eta$ and if $Q$ is an equilibrium, for every initial point $x_0$ the problem
$$\min\{F(\gamma,Q)\,:\,\gamma(0)=x_0\}$$
has a unique solution.
\end{proposition}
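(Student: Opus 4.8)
The plan is to derive uniqueness from a uniform (strong) convexity estimate that is essentially contained in Lemma \ref{hh' estimate}. First I would fix the geometric setup. By Corollary \ref{all optimal C11}, \emph{every} curve that is optimal for $F(\cdot,Q)$ with fixed initial point is $C^{1,1}$ and satisfies $|\gamma'|\leq ||z||_\infty/\delta=:L$; moreover $Q$ is concentrated on curves with this same Lipschitz bound, so in the notation of Lemma \ref{hh' estimate} we may take $C_0=L$. Existence of at least one minimizer follows from the direct method (lower semicontinuity of $F(\cdot,Q)$, as in Remark \ref{r4.1}, together with the coercivity provided by Lemma \ref{Kkin}, recalling $V_Q\geq 0$ and $\gamma(0)=x_0$ fixed). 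Given two minimizers $\gamma_0,\gamma_1$, I would apply Lemma \ref{hh' estimate} at the base point $\gamma_0$ with the perturbation $h:=\gamma_1-\gamma_0$, which satisfies $h(0)=0$.

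The key point is that the linear form $A[\gamma_0](h)$ is exactly the first variation $\frac{d}{ds}F(\gamma_0+sh,Q)\big|_{s=0}$; differentiation under the integral sign is justified since $|\gamma_0'|,|\omega'|\leq L$, $\eta$ and $\nabla\eta$ are bounded, and $h\in H^1$. Because $\gamma_0$ minimizes $F(\cdot,Q)$ over the \emph{affine} set $\{\gamma:\gamma(0)=x_0\}$, and $\gamma_0+sh$ stays in this set for every $s$, minimality forces $A[\gamma_0](h)=0$. Lemma \ref{hh' estimate} then collapses to
$$F(\gamma_1,Q)\geq F(\gamma_0,Q)+\frac\delta2\int_0^T|h'|^2dt-C(L+C)\int_0^T|h||h'|dt-C(L+C)^2\int_0^T|h|^2dt-C|h(T)|^2.$$

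Next I would absorb the three error terms into the good term using Poincaré-type inequalities valid because $h(0)=0$: from $|h(t)|^2\leq t\int_0^T|h'|^2$ one gets $\int_0^T|h|^2\leq\frac{T^2}{2}\int_0^T|h'|^2$, $|h(T)|^2\leq T\int_0^T|h'|^2$, and $\int_0^T|h||h'|\leq\frac{T}{\sqrt2}\int_0^T|h'|^2$. Collecting these, the right-hand side is bounded below by
$$F(\gamma_0,Q)+\Big[\frac\delta2-\frac{C(L+C)T}{\sqrt2}-\frac{C(L+C)^2T^2}{2}-CT\Big]\int_0^T|h'|^2dt.$$
For $T$ below a threshold depending only on $\delta$, on $L=||z||_\infty/\delta$, and on the constant $C$ (which depends on $\lambda$, $\eta$ and the semi-convexity constants, i.e. the lower bounds of $D^2\Psi$ and $D^2\eta$), the bracket is at least $\delta/4>0$. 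Since $\gamma_0,\gamma_1$ are both minimizers, $F(\gamma_1,Q)=F(\gamma_0,Q)$, hence $\int_0^T|h'|^2=0$; as $h(0)=0$ this gives $h\equiv0$, i.e. $\gamma_0=\gamma_1$.

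The analytic heavy lifting — the quadratic lower bound with its explicit error terms and the exploitation of semi-convexity of $\Psi$ and $\eta$ — is already performed in Lemma \ref{hh' estimate}, so there is no genuine obstacle here. The two points that require care are (i) the rigorous identification of $A[\gamma_0]$ with the first variation and its vanishing at the minimizer, where the no-boundary hypothesis (H$\Omega$) and the $C^1$ regularity of $\Psi,\eta$ are used, and (ii) the bookkeeping of the dependence of the smallness threshold on $T$, in order to confirm it is governed only by $\delta,\lambda,||z||_\infty,\eta$ and the lower bounds of $D^2\Psi,D^2\eta$, as claimed.
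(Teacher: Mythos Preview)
Your proof is correct and follows essentially the same route as the paper: both arguments rest on Lemma \ref{hh' estimate} and the Poincar\'e-type inequalities for $h$ with $h(0)=0$ to make the quadratic remainder strictly positive when $T$ is small. The only cosmetic difference is that the paper phrases the conclusion as strict convexity of $F(\cdot,Q)$ on the set of $L$-Lipschitz curves (from which uniqueness follows), whereas you compare two minimizers directly and kill the linear term $A[\gamma_0](h)$ via the first-variation argument; the content is the same.
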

\begin{proof}
We know from Corollary \ref{all optimal C11} that any optimal curve is necessarily $L-$Lipschitz continuous with $L=||z||_\infty$, so the result is proven if we prove, for instance, that $\gamma\mapsto F(\gamma,Q)$ is strictly convex on the set of $L-$Lipschitz continuous curves. For this, we use Lemma \ref{hh' estimate} since $Q$ is also concentrated on $L$-Lipschitz curves. In order to prove convexity it is enough to give conditions so that the second-order term appearing in \eqref{hh' eqn} is strictly positive for any function $h$ with $h(0)=0$ which is not identically $0$.
We use the following standard inequality which is valid for $h\in H^1$ with $h(0)=0$:
$$|h(t)|^2\leq \left(\int_0^T|h'(t)|dt\right)^2\leq T\int_0^T |h'(t)|^2dt.$$
We will use it for $t=T$ but also integrate, thus obtaining
$$\int_0^T|h(t)|^2dt\leq T^2 \int_0^T|h'(t)|^2dt$$
and
$$ \int_0^T|h(t)||h'(t)|dt\leq \left(\int_0^T|h(t)|^2dt\right)^{1/2} \left(\int_0^T|h'(t)|^2dt\right)^{1/2}\leq T \int_0^T|h'(t)|^2dt.$$
These inequalities allow to show the strict convexity of $F(\cdot,Q)$ if $T$ and $T(L+C)$ are small compared to $\delta$.\end{proof}

\section{Mean Field Game for multipopulations}

All the analysis in the previous sections has been developed in the simplified case where only one population of indistuingishable agents was moving. The only feature distinguishing the agents was their initial point. Yet, in applications the most interesting case is the one with several populations. Each population will be represented by a measure $Q_i\in \mathcal M_+(\Gamma)$ where $\mathcal M_+$ stands for the space of finite positive measures (indeed, there is no reason for the different populations to have the same mass, and it is hence not possible to normalize them to probability measures). Each population $i=1,\cdots,N$ will have its own target function $\Psi_i$ and its own mobility coefficient $\delta_i>0$ (if a population has smaller $\delta_i$, this means that its agents are less subject to effort costs for moving at high speed). On the other hand, the interaction cost (i.e. the cost due to the difference of the velocity of an agents w.r.t. that of the other agents) will be supposed to be the same for everybody, and will involve the interaction between each agent and every other agent, from the same population and from the others. In particular, we set $Q=\sum_i Q_i$ and all the agents of the population $i$ will try to minimize
$$F_i(\gamma,Q):=K_{\delta_i,\Psi_i}(\gamma)+V_Q(\gamma).$$
Given for each population an initial measure $m_{0,i}$, a target function $\Psi_i$, and a mobility parameter $\delta_i>0$, and given a common interaction weighting function $\eta$ and a parameter $\lambda>0$, we call the corresponding game $\mathrm{multi}MFG(\Omega,(\delta_i)_i, (\Psi_i)_i,\lambda,\eta, (m_{0,i})_i)$ and define a multipopulation equilibrium as follows:

\begin{definition}
	A family of measures $Q_i\in\mathcal{M}_+(\Gamma)$ is said to be an equilibrium of the game $\mathrm{multi}MFG(\Omega,(\delta_i)_i, (\Psi_i)_i,\lambda,(m_{0,i})_i)$ if $e_{0\#}Q_i=m_{0,i}\in\mathcal{M}_+(\Omega)$ and
		\begin{equation}\label{equil4}
		\int_{\Gamma}F_i(\gamma,Q)dQ(\gamma)<\infty\quad \forall i,\quad 
		F_i(\gamma,Q)=\inf_{\substack{w\in \Gamma\\\omega(0)=\gamma(0)}} F_i(\omega,Q),\quad \forall ~\gamma\in\spt(Q_i).
		\end{equation}
\end{definition}

The same analysis, with easy variants, as performed in the previous sections allows to prove the following facts
\begin{itemize}
\item If we define 
$$\mathcal J(Q_1,\dots,Q_N):=2\sum_i \int K_{\delta_i,\Psi_i}(\gamma)dQ_i(\gamma)+\lambda\mathcal V(\sum_i Q_i)$$
and $\eta$ is even, then every local minimizer of $\mathcal J$ in the set 
$$\{(Q_1,\dots,Q_N)\in \mathcal M_+(\Gamma)^N\;:\; (e_0)_\#Q_i=m_{0,i}\}$$ is an equilibrium;
\item for every equilibrium, $Q=\sum_i Q_i$ is concentrated on $C^{1,1}$ curves;
\item for each $i$ there exists a velocity field $v_i:[0,T]\times\Omega\to\R^d$ such that all curves $\gamma\in \spt(Q_i)$ satisfy $\gamma'(t)=v_i(t,\gamma(t)$ and we have monokineticity for each separate population.
\end{itemize}

It is in the framework of multipopulation equilibria that one can consider the question about lane formation that we mentioned in the introduction. For instance, one can take $\Omega=[-L,L]\times[0,1]\subset\R^2$ a long corridor, $N=2$ and $\Psi_1(x)=x_1,\Psi_2(x)=-x_1$, so that the agents of the two populations spontaneously move in opposite directions of the corridor. If $\eta(z)=e^{-|z|/\ve}$, one would expect the formation of lanes with (approximate) segregation of the two populations, the width of these lanes being of order $\ve$.

\bibliographystyle{amsplain}

\begin{thebibliography}{99}




\bibitem{AchManMarTch} {\sc Y. Achdou, P. Mannucci, C. Marchi, N. Tchou} Deterministic mean field games with control on the acceleration, {\it Nonlinear Differential Equations and Applications NoDEA} volume 27, Article 33 (2020) 

\bibitem{AchKob} {\sc Y. Achdou, Z. Kobeissi} Mean Field Games of Controls: Finite Difference Approximations, preprint, https://arxiv.org/abs/2003.03968


\bibitem{BarCar} {\sc M. Bardi, P. Cardaliaguet} Convergence of some Mean Field Games systems to aggregation and flocking models, preprint, arXiv:2004.04403.


\bibitem{BenCarSan}
{\sc J. D. Benamou, G. Carlier, F. Santambrogio,}
Variational Mean Field Games, in {\it Active Particles, Volume 1: Theory, Models, Applications}, edited by N. Bellomo, P. Degond, E. Tadmor, 141--171, 2017.

\bibitem{BlaMosSan} {\sc A. Blanchet, P. Mossay, F. Santambrogio}
Existence and uniqueness of equilibrium for a spatial model of social interactions
{\it International Economic Review} 57 (1), 31-60, 2016


\bibitem{br} {\sc Y. Brenier}, Minimal geodesics on groups of volume-preserving maps and generalized solutions of the Euler equations, {\it Comm. Pure Appl. Math.}, 52 (1999) 4, 411--452.

\bibitem{lane3} {\sc M. Burger, S. Hittmeir, H. Ranetbauer, M.-T. Wolfram}
Lane formation by side-stepping, {\it SIAM Journal on Mathematical Analysis} 48 (2), 981--1005, (2016)

\bibitem{CanCap} {\sc P. Cannarsa, R. Capuani,} Existence and Uniqueness for Mean Field Games with State Constraints, in {\it PDE Models for Multi-Agent Phenomena}, edited by P. Cardaliaguet, A. Porretta, F. Salvarani, Springer INDAM Series, 49--71, 2018.


\bibitem{CanMen} {\sc P. Cannarsa, C. Mendico}
Mild and weak solutions of Mean Field Games problem for linear control systems, preprint, arXiv:1907.02654


\bibitem{Carnotes}
{\sc P. Cardaliaguet}, Notes on Mean Field Games, unpublished, available at the webpage {\tt https://www.ceremade.dauphine.fr/$\sim$cardalia/MFG20130420.pdf}


\bibitem{CarMesSan}
{\sc P. Cardaliaguet, A. R. M\'esz\'aros, F. Santambrogio}
First order Mean Field Games with density constraints: pressure equals price. {\it SIAM J. Control Optim.} 54 (2016), no. 5, 2672--2709

\bibitem{CarLeh}
{\sc P. Cardaliaguet, C.-A. Lehalle}
Mean field game of controls and an application to trade crowding {\it Mathematics and Financial Economics} volume 12, pages 335--363 (2018)


\bibitem{CarJimSan}  {\sc G. Carlier, C. Jimenez, F. Santambrogio}, Optimal transportation with traffic congestion and Wardrop equilibria, {\it SIAM J. Control Optim.} (47), 2008, 1330-1350.



	\bibitem{C-V} {\sc C. Castaing and M. Valadier} \textit{Convex analysis and measurable multifunctions}. Lecture Notes in Mathematics, {\bf 580}.  Springer-Verlag, Berlin-New York (1977).
	
	\bibitem{C-S} {\sc F. Cucker and S. Smale} Emergent behavior in flocks. {\it IEEE Trans. Automat. Control} {\bf 52} (2007), 852--862.
	
	\bibitem{Giusti} {\sc E. Giusti}:  \textit{Direct methods in the calculus of variations}. World Scientific Publishing Co., Inc., River Edge, NJ (2003).
	
\bibitem{extended1} {\sc Diogo A. Gomes and Vardan K. Voskanyan}. Extended mean field games. {\it Izv. Nats. Akad.
Nauk Armenii Mat.}, 48(2):63--76, 2013.

\bibitem{extended2} {\sc Diogo A. Gomes, Stefania Patrizi, and Vardan Voskanyan}. On the existence of
classical solutions for stationary extended mean field games. {\it Nonlinear Anal.}, 99:49--
79, 2014.
		
\bibitem{HCMieeeAC06} {\sc M. Huang, R.P. Malham\'e, P.E. Caines}, Large population stochastic dynamic games: closed-loop McKean-Vlasov systems and the Nash certainty equivalence principle, {\it Communication in information and systems}, 6 (2006), No. 3, 221-252. 
	
	\bibitem{H-Liu} {\sc S.-Y. Ha, J.-G. Liu} A simple proof of the Cucker-Smale flocking dynamics and mean-field limit. {\it Commun. Math. Sci.} {\bf 7} (2009), 297--325.
	
\bibitem{lane1} {\sc Helbing, D., Farkas, I.J., Moln\'ar, P., et al.,}  Simulation of pedestrian crowds in normal and evacuation situations. {\it Proc. 1st Int. Conf. on Pedestrian and Evacuation Dynamics,} p.21-58. (2002)

	\bibitem{Kak}
	{\sc S. Kakutani}, A generalization of Brouwer's fixed point theorem. {\it Duke Math. J}. \textbf{7}(1941), 457--459.
	
	
\bibitem{Kob} {\sc Z. Kobeissi} On Classical Solutions of the Mean Field Game System of Controls, preprint, arXiv:1904.11292

\bibitem{Ziad-thesis} {\sc Z. Kobeissi} Contributions to the theory of Mean Field Games, PhD thesis, Universit\'e de Paris (to be defended in October 2020).
	

\bibitem{LL06cr1} {\sc J.-M. Lasry, P.-L. Lions}, Jeux \`a champ moyen. I. Le cas stationnaire,
{\it C. R. Math. Acad. Sci. Paris},  343  (2006),  No. 9, 619--625.

\bibitem{LL06cr2} {\sc J.-M. Lasry, P.-L. Lions}, Jeux \`a champ moyen. II. Horizon fini et contr\^ole optimal,
{\it C. R. Math. Acad. Sci. Paris},  343  (2006),  No. 10, 679--684.

\bibitem{LL07mf} {\sc J.-M. Lasry, P.-L. Lions}, Mean field games,  {\it Jpn. J. Math.},  2  (2007),  no. 1, 229--260.


\bibitem{LavSan}
{\sc H. Lavenant, F. Santambrogio.} Optimal density evolution with congestion: L$^\infty$ bounds via flow interchange techniques and applications to variational Mean Field Games. \emph{Communications in Partial Differential Equations}, Vol 43 no 12 (2018), p. 1761--1802.



\bibitem{LionsCDF} {\sc P.-L. Lions}, {\it Lectures at Coll\`{e}ge de France}, videorecorded, available at \url{www.college-de-france.fr}.


\bibitem{MauRouSan} {\sc B. Maury, A. Roudneff-Chupin, F. Santambrogio} A macroscopic crowd motion model of gradient flow type, {\it Math. Models and Methods in Appl. Sciences} Vol. 20, No. 10 (2010), 1787--1821 




   \bibitem{crowd1} {\sc  B. Maury, J. Venel}, A mathematical framework for a crowd motion model, {\it  C. R. Acad. Sci. Paris, Ser. I} {346} (2008), 1245--1250.



     \bibitem{crowd2} {\sc B. Maury, J. Venel,} A discrete contact model for crowd motion 
{\it ESAIM: Math. Mod. Num. An.} 45 (1), 145--168, 2011.

	
	\bibitem{M-S}
	{\sc G. Mazanti and F. Santambrogio:} Minimal-time mean field games. {\it Math. Models Methods Appl. Sci. } \textbf{29} (2019), 1413--1464.
	
	\bibitem{N}
	{\sc J. Nash} Non-cooperative games. {\it Ann. Math.} \textbf{2}(1951), 286--295.
	
	\bibitem{M-P-R}
	{\sc  M. Nourian, P.E. Caines and R.P. Malham\'e} Mean field analysis of controlled cucker-smale type flocking: Linear analysis and perturbation equations. {\it
	IFAC Proc.} {\bf 44} (2011), 4471--4476.

\bibitem{OTAM} {\sc F. Santambrogio} {\it Optimal Transport for Applied Mathematicians}, book, {\it Progress in Nonlinear Differential Equations and Their Applications}  87, Birkh\"auser Basel (2015).

\bibitem{modestproposal} {\sc F. Santambrogio}, A modest proposal for MFG with density constraints, {\it Netw. Heterog. Media}, 7 (2012) No. 2, 337--347.

\bibitem{CIME} {\sc F. Santambrogio}, Lecture notes on Variational Mean Field Games, to appear in {\it Mean Field Games -- Cetraro, Italy, 2019}, Cardaliaguet and Porretta (Eds), Springer, C.I.M.E. Foundation Subseries.

\bibitem{survey-crowd}  {\sc F. Santambrogio}, Crowd motion and evolution PDEs under density constraints, ESAIM: Proceedings and Surveys 64, 137-157, 2018.

 
 \bibitem{lane2}
{\sc Tao Xiong, Peng Zhang, S.C. Wong, Chi-Wang Shu, Mengping Zhang}  A Macroscopic Approach to the Lane Formation Phenomenon in Pedestrian Counterflow. {\it Chinese Physics Letters.} 28. 108901. (2011)	


\end{thebibliography}

\end{document}